\newtheorem{theorem}{Theorem}[section]
\newtheorem{question}[theorem]{Question}
\newtheorem{corollary}[theorem]{Corollary}
\newtheorem{proposition}[theorem]{Proposition}
\theoremstyle{definition}
\newtheorem{remark}[theorem]{Remark}
\numberwithin{equation}{section}
\begin{document}


\baselineskip=17pt

\DeclareGraphicsRule{.tif}{png}{.png}{`convert \#1 `basename \#1
.tif`.png}
\author{Manuel Rivera and Mahmoud Zeinalian}

\newcommand{\Addresses}{{
  \bigskip
  \footnotesize

    \textsc{Manuel Rivera, Department of Mathematics, Purdue University, 150 N. University Street, West Lafayette, IN 47907-2067}\par\nopagebreak \textit{E-mail address} \texttt{manuelr@purdue.edu}

  \medskip
  \medskip

  \textsc{Mahmoud Zeinalian, Department of Mathematics, City University of New York, Lehman College, 250 Bedford Park Blvd W, Bronx, NY 10468
   }\par\nopagebreak
  \textit{E-mail address} \texttt{mahmoud.zeinalian@lehman.cuny.edu}

}}

\begin{abstract}
We show that the natural algebraic structure of the singular chains on a path connected topological space determines the isomorphism class of the fundamental group functorially. Moreover, we describe a notion of weak equivalence for the relevant algebraic structure under which the isomorphism class of the fundamental group is preserved. 
\end{abstract}

\title[Singular chains and the fundamental group]{Singular chains and the fundamental group}

\subjclass[2010]{Primary 55P15, 55U40; Secondary 57T30, 55P35}

\keywords{singular chains, fundamental group, dg coalgebra, bar-cobar duality}

\maketitle

\section{Introduction}

One knows in algebraic topology that the homotopical properties of path-connected topological spaces can be recast into the language of topological monoids by associating to a space its based loop space. The based loop space is a topological monoid consisting of all loops based at a fixed point in the underlying space equipped with a continuous multiplication given by concatenating loops. Furthermore, every loop has an inverse up to homotopy and one can show that the based loop space has the homotopy type of a topological group. This construction relates two classical algebraic invariants both of which date back to the work of Poincar\'e: homology groups and the fundamental group. For example, the 0-th homology group of the based loop space is an algebra which can be identified with the group algebra of the fundamental group. Any group algebra has a compatible coproduct operation making it into a structure known as a bialgebra. The bialgebra structure of a group algebra is in fact a Hopf algebra, namely, it has a linear endomorphism, called the antipode, satisfying a compatibility equation. If a bialgebra admits an antipode then such an endomorphism is unique. The group algebra, when considered as a Hopf algebra, determines the underlying group functorially through the ``group-like elements" functor. 

The main result of this article says that the Hopf algebra determining the fundamental group and higher dimensional aspects can be determined, in complete generality, directly from the natural algebraic structure of the chain complex of singular chains on the underlying space. We also provide a notion of weak equivalence for the algebraic structure under consideration through which the isomorphism class of the fundamental group is preserved functorially. The new idea, beyond the technical details, is to combine 1) the bar-cobar duality theory for algebraic structures, 2)  the homotopical symmetry of chain approximations to the diagonal map on a space (also known as the $E_{\infty}$-coalgebra structure on chains extending the Alexander-Whitney diagonal), and 3) an extension from simply connected spaces to path-connected spaces of a classical result of Frank Adams which relates the based loop space and the cobar functor \cite{Ad56}. 

A more precise description of our results is the following. For any path connected pointed space $(X,x)$ we describe a differential graded (dg) bialgebra structure on the cobar construction of the dg coalgebra of normalized (pointed) singular chains with Alexander-Whitney coproduct. This dg bialgebra structure on the cobar construction of the singular chains on $X$ models the chains on the based loop space of $X$. Hence, this dg bialgebra has the property that the induced bialgebra structure on the $0$-th homology is a Hopf algebra, more specifically, it is a group algebra. The fundamental group can be recovered by applying the group-like elements functor to such a Hopf algebra. The dg bialgebra structure on the cobar construction of the dg coalgebra of singular chains is determined by explicit maps part of an algebraic package describing the homotopical symmetry of the Alexander-Whitney diagonal approximation. 

The notion of weak equivalence, under which the isomorphism class of the fundamental group is preserved, is called $\Omega$-quasi-isomorphism. An $\Omega$-quasi-isomorphism of dg coassociative coalgebras, possibly with extra algebraic structure, is defined to be a morphism which induces a quasi-isomorphism after applying the cobar functor to the underlying dg coassociative coalgebra structure. Every $\Omega$-quasi-isomorphism of dg coalgebras is a quasi-isomorphism but not vice-versa. 

One of the fundamental problems in algebraic topology is to associate natural algebraic invariants to a certain class of topological spaces that determine those spaces up to a specified notion of equivalence. The rational homotopy theory of Quillen and Sullivan provide an answer for the class of simply connected spaces up to rational homotopy equivalence. The algebraic invariant associated to a space, in this case, is extracted from the graded commutative algebra structure of the rational cochains. 

Mandell, Goerss, Karoubi, Kriz, Blomquist and Harper, among others, have studied $p$-adic and integral versions of the problem by generalizing the constructions of rational homotopy theory. Most approaches are based on the existence of an $E_{\infty}$-algebra structure on the cochains of a space extending the cup product. The construction of such algebraic structure comes from dualizing an $E_{\infty}$-coalgebra structure at the level of chains extending the Alexander-Whitney coproduct. The statements found in the literature relating weak homotopy types and $E_{\infty}$-structures require a simple connectivity or nilpotency hypothesis on the class of spaces being considered. 

Our motivation is to extend these classical results regarding complete algebraic invariants for simply connected or nilpotent homotopy types to the class of connected homotopy types. More precisely, we would like to understand to what extent the $E_{\infty}$-coalgebra structure of singular chains, with the correct piece of extra structure, under $\Omega$-quasi-isomorphisms determines the integral homotopy type of a path connected space. This should lead to incorporating the fundamental group into the algebraic package of \cite{Ma06}. We hope this note serves as a first step towards this goal.

In section 2, we introduce notation, recall basic constructions, and make precise statements. In section 3, we discuss the proofs of our statements using language and basic results from higher category theory. 

\section{Main statements}

We work at the level of categories with weak equivalences. A category with weak equivalences is a pair $(\mathcal{C}, \mathcal{W})$ where $\mathcal{C}$ is an ordinary category and $\mathcal{W} $ is a distinguished class of morphisms in $\mathcal{C}$ called weak equivalences which is closed under composition, contains all isomorphisms, and satisfies the following two out of three property: for any two composable morphisms $f$ and $g$ in $\mathcal{C}$, if two elements of $\{f, g, g \circ f\}$ are in $\mathcal{W}$, then so is the third. A functor between categories with weak equivalences is a functor of ordinary categories which preserves the weak equivalences.

Fix a commutative ring $\mathbf{k}$ with unit $1_{\mathbf{k}}$. Let $\mathcal{Set}_{\Delta}=\text{Fun}(\Delta^{op}, \mathcal{Set})$ be the category of simplicial sets and denote by $w.h.e.$ the class of weak homotopy equivalences (also known as Kan weak equivalences) i.e. morphisms $f: S \to S'$ of simplicial sets for which the map $|f|: |S| \to |S'|$ between geometric realizations induces an isomorphism on homotopy groups. Associated to any $S \in \mathcal{Set}_{\Delta}$ there is a natural differential graded (dg) coalgebra $C(S)=(C_*(S), \partial, \Delta)$ over $\mathbf{k}$ of normalized simplicial chains with coproduct $\Delta$ known as the Alexander-Whitney diagonal approximation. Recall that for any $n \geq 0$,  $C_n(S)$ is $\mathbf{k}$-module freely generated by the elements of $S_n$  modulo degenerate simplices. For any generator $\sigma \in C_n(S)$, the differential $\partial: C_n(S) \to C_{n-1}(S)$ is induced by the formula $\partial(\sigma)= \sum_{i=0}^n (-1)^iS(d_i)(\sigma)$ and the coproduct $\Delta: C(S) \to C(S) \otimes C(S)$ by
\[ \Delta(\sigma)= \bigoplus_{p+q=n} S(f_p)(\sigma) \otimes S(l_q)(\sigma),\]
where $d_i: [n-1] \to [n]$ is the inclusion that ``skips" $i$ (the $i$-th coface map), $f_p: [p] \to [p+q]$ is $f_p(i)=i,$ and $l_q: [q] \to [p+q]$ is $l_q(i)=p+i$. 

Let $\mathcal{Set}_{\Delta}^0 \subset \mathcal{Set}_{\Delta}$ be the full subcategory whose objects are simplicial sets $S$ such that $S_0$ is a singleton. The objects of $\mathcal{Set}_{\Delta}^0$ are called $0$-reduced simplicial sets. The normalized chains construction defines a functor 
$$C: (\mathcal{Set}^0_{\Delta}, w.h.e. ) \to (\mathcal{C}_{\mathbf{k}}, q.i.)$$
where $\mathcal{C}_{\mathbf{k}}$ is the category of non-negatively graded connected dg coalgebras and $q.i.$ the class of quasi-isomorphisms. Recall a dg coalgebra $C$ is said to be connected if $C_0\cong \mathbf{k}$ and a quasi-isomorphism of dg coalgebras is a morphism which induces an isomorphism on homology. Whenever we say dg (co)algebra in this article we mean dg (co)associative (co)algebra. Let $\mathcal{Ch}_{\mathbf{k}}$ be the category of non-negatively graded chain complexes of $\mathbf{k}$-modules. It will be useful to denote by
$$\bar{C}: \mathcal{Set}_{\Delta} \to \mathcal{Ch}_{\mathbf{k}}$$
the normalized simplicial chains functor $\bar{C}(S)= (C_*(S), \partial)$ without the coalgebra structure.

Recall the definition of the cobar functor
$$\Omega: \mathcal{C}_{\mathbf{k}} \to \mathcal{A}_{\mathbf{k}},$$
from the category of connected dg coalgebras to the category of augmented dg algebras. For any connected dg coalgebra $(C, \partial_C:C \to C, \Delta_C:C\to C \otimes C)$ define a dg algebra
$$\Omega C := ( T(s^{-1}  \tilde{C} ), D, \mu)$$
where $\tilde{C}_0:=0$ and $\tilde{C}_i=:C_i$ for $i>0$, $s^{-1}$ is the shift by $-1$ functor, $T(s^{-1} \tilde{C})= \bigoplus_{i=0}^{\infty} (s^{-1}\tilde{C})^{\otimes i}$ the tensor algebra with product $\mu$ given by concatenation of monomials, and the differential $D$ is defined by extending the linear map $$- s^{-1} \circ \partial_C \circ s^{+1} + (s^{-1} \otimes s^{-1}) \circ \Delta_C \circ s^{+1}: s^{-1}\tilde{C} \to s^{-1}\tilde{C} \oplus (s^{-1}\tilde{C} \otimes s^{-1}\tilde{C})$$ as a derivation to obtain a map $D: T(s^{-1} \tilde{C}) \to T(s^{-1} \tilde{C})$. The coassociativity of $\Delta$, the compatibility of $\partial$ and $\Delta$, and the fact that $\partial^2 =0$ together imply that $D^2=0$. The projection $T(s^{-1}\tilde{C}) \to (s^{-1}\tilde{C})^{\otimes 0}=\mathbf{k}$ defines an augmentation. The construction of $\Omega$ is clearly natural with respect to maps of dg coalgebras and thus defines a functor.

Let $\Omega\text{-}q.i.$ be the class of morphisms $f: C \to C'$ in $\mathcal{C}_{\mathbf{k}}$ for which the induced dg algebra map at the level of cobar constructions $\Omega f: \Omega C \to \Omega C'$ is a quasi-isomorphism. We call the morphisms in $\Omega\text{-}q.i.$ $\Omega$-quasi-isomorphisms. Any $\Omega$-quasi-isomorphism of dg coalgebras is a quasi-isomorphism but not viceversa. Hence, there is an inclusion functor $\iota: (\mathcal{C}_{\mathbf{k}}, \Omega\text{-}q.i.) \to (\mathcal{C}_{\mathbf{k}}, q.i.)$. There is a model category structure on the category of dg coalgebras having $\Omega\text{-}q.i.$ as weak equivalences, but this fact will not be used in this article. 

Let $K: (\mathcal{Set}^0_{\Delta}, w.h.e.) \to (\mathcal{Set}^0_{\Delta}, w.h.e.)$ be the functor $K(S)= \text{Sing}(|S|,x)$ where $S_0=\{x\}$, $|S|$ the geometric realization of $S$, and $\text{Sing}(|S|,x)$ denotes the Kan complex consisting of singular simplices $|\Delta^n| \to |S|$ sending all the vertices of $|\Delta^n|$ to $x$. $K$ is a fibrant replacement functor in the Quillen model category of simplicial sets. Our first observation is the following. 

\begin{theorem} \label{thm1} There exists a functor between categories with weak equivalences
$$C^K: (\mathcal{Set}^0_{\Delta}, w.h.e. ) \to (\mathcal{C}_{\mathbf{k}}, \Omega\text{-}q.i.)$$ such that 
\\
(a) $\iota \circ C^K = C \circ K$, and
\\
(b) for any $S \in Set_{\Delta}^0$ there is a quasi-isomorphism of dg algebras $$\Omega (C^K(S)) \simeq \bar{C}(\emph{Sing}( \Omega_x|S|)),$$ where $\Omega_x|S|$ denotes the topological monoid of (Moore) loops in $|S|$ based at $x$ with product induced by concatenation of loops and $\bar{C}(\emph{Sing}( \Omega_x|S|))$ the dg algebra of singular chains on $\Omega_x|S|$.
\end{theorem}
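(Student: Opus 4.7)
The plan is to define $C^K$ on objects by the composite $C^K(S) := C(K(S))$, and on morphisms by $C^K(f) := C(K(f))$. Since $K(S) = \text{Sing}(|S|,x)$ lies in $\mathcal{Set}^0_{\Delta}$ and $C$ takes values in connected dg coalgebras, this yields a well-defined functor of ordinary categories. Property (a), namely $\iota \circ C^K = C \circ K$ as functors landing in $(\mathcal{C}_{\mathbf{k}}, q.i.)$, is immediate from the definition.

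For property (b), the strategy is to apply an extension of the classical theorem of F. Adams from simply connected to path-connected spaces. Adams' original result produces, for a reduced simply connected simplicial set $Y$, a natural quasi-isomorphism of dg algebras $\Omega C(Y) \simeq \bar{C}(\text{Sing}(\Omega_x |Y|))$. The analogous statement for an arbitrary reduced Kan complex, proved by employing a cubical/necklace model of the cobar construction together with the Kan condition, gives the same conclusion without simple connectivity. Specialize this extended Adams theorem to $Y = K(S)$, which is $0$-reduced and a Kan complex by construction, to obtain a natural quasi-isomorphism $\Omega C(K(S)) \simeq \bar{C}(\text{Sing}(\Omega_x |K(S)|))$. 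Since the counit $|K(S)| = |\text{Sing}(|S|,x)| \to |S|$ is a pointed homotopy equivalence, it induces a homotopy equivalence of topological monoids $\Omega_x|K(S)| \to \Omega_x|S|$, hence a quasi-isomorphism on singular chains, and composition delivers the claimed equivalence.

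To verify that $C^K$ sends $w.h.e.$'s to $\Omega$-quasi-isomorphisms, take a weak homotopy equivalence $f: S \to S'$ in $\mathcal{Set}^0_{\Delta}$. Then $|f|: |S| \to |S'|$ is a pointed homotopy equivalence, so $\Omega_x|f|: \Omega_x|S| \to \Omega_x|S'|$ is a homotopy equivalence of topological monoids and induces a quasi-isomorphism $\bar{C}(\text{Sing}(\Omega_x|S|)) \to \bar{C}(\text{Sing}(\Omega_x|S'|))$ of dg algebras. By the naturality of the quasi-isomorphism established in (b), applied to both $S$ and $S'$ and to the morphism $K(f)$, it follows that $\Omega C^K(f) = \Omega C(K(f))$ is a quasi-isomorphism, i.e., $C^K(f) \in \Omega\text{-}q.i.$ as required.

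The principal obstacle in this program is the extension of Adams' theorem beyond the simply connected setting; Adams' original argument relies on the cobar construction of a simply connected dg coalgebra being a genuine model for chains on the based loop space, which fails in the presence of nontrivial $\pi_1$ unless one works with a suitably enhanced cobar construction (for instance, over the cubical/necklace-based model of $\Omega_x|S|$) applied to a fibrant replacement such as $K(S)$. Granting this ingredient, the remainder of the argument is a formal consequence of the naturality of the Adams-type equivalence together with standard homotopy-invariance of singular chains on topological monoids.
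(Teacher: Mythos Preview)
Your outline is correct and follows the paper's strategy exactly: set $C^K = C \circ K$, so (a) is tautological, and deduce (b) from an extension of Adams' theorem to path-connected spaces applied to the Kan complex $K(S)$. The substance of the paper's proof is precisely the ingredient you grant. It is established via Lurie's rigidification functor $\mathfrak{C}$ and a cubical variant $\mathfrak{C}_{\square_c}$: writing $\Gamma(S) = \bar{C}(\mathfrak{C}(S)(x,x))$ and $\Lambda(S) = C_{\square}(\mathfrak{C}_{\square_c}(S)(x,x))$, the paper shows (i) $\Lambda(S) \cong \Omega(C(S))$ as dg algebras by an explicit combinatorial isomorphism, (ii) $\Lambda(S) \simeq \Gamma(S)$ via a triangulation map and acyclic models, and (iii) $\Gamma(K(S)) \simeq \bar{C}(\text{Sing}(\Omega_x|S|))$ from the weak equivalence of simplicial categories $\mathfrak{C}(\text{Sing}(X,b)) \simeq \textbf{Sing}(\mathbf{\Omega}X)$ for path-connected $X$. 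Chaining these gives (b).

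One procedural difference: to show $C^K$ sends weak homotopy equivalences to $\Omega$-quasi-isomorphisms, the paper does not route through (b) and the based loop space as you do. Instead it invokes directly that $\mathfrak{C}$ sends weak equivalences between Kan complexes to weak equivalences of simplicial categories, so $\Gamma(K(f))$ is a quasi-isomorphism, and then transfers across the natural equivalence $\Omega(C^K(-)) \cong \Lambda(K(-)) \simeq \Gamma(K(-))$. Your route via naturality of (b) also works, provided one tracks that the zigzag producing (b) is natural in $S$.
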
 
Note that \textit{(a)} is equivalent to saying that $C \circ K$ sends weak homotopy equivalences to the subclass of $\Omega$-quasi-isomorphisms inside quasi-isomorphisms. 

\begin{remark} In general, $\Omega (C(S))$ is not quasi-isomorphic to $\Omega (C^K(S))$ even though $C(S)$ and $C^K(S)$ are quasi-isomorphic as dg coalgebras (the cobar functor $\Omega$ does not preserve quasi-isomorphisms). By a classical theorem of Adams \cite{Ad56}, there is a quasi-isomorphism $\Omega (C(S)) \simeq \bar{C}(\text{Sing}( \Omega_x|S|))$ of dg algebras for any $1$-reduced simplicial set $S$, namely, for any $S$ with no non-degenerate $1$-simplices. Moreover, the notions of quasi-isomorphism and $\Omega$-quasi-isomorphism are equivalent when restricted to the category of simply-connected dg coalgebras, i.e. connected dg coalgebras $C$ such that $C_1=0$. Hence, Theorem \ref{thm1} may be regarded as an extension of Adams' theorem.
\end{remark}

We construct a lift of $C^K$ to a new category $\mathcal{B}_{\mathbf{k}}$ with weak equivalences $\Omega\text{-}q.i.$ by adding more algebraic structure on the chains that determine the fundamental group. An object  $\mathbf{C}$ in $\mathcal{B}_{\mathbf{k}}$  consists of the data  $\mathbf{C}= (C, \partial, \Delta, \nabla, \eta, \epsilon)$ where $(C, \partial: C \to C, \Delta: C \to C \otimes C)$ is a connected dg coalgebra over $\mathbf{k}$, $\nabla: \Omega C \to \Omega C \otimes \Omega C$ is a coassociative coproduct on the cobar construction of $C$ making $\Omega C$ a dg bialgebra (i.e. $(\Omega C, D,\nabla)$ is a dg coalgebra and $\nabla$ is an algebra map) with unit $\eta: \mathbf{k} \to \Omega C$, counit $\epsilon: \Omega C \to \mathbf{k}$, and with the property that $H_0(\Omega C)$ is a Hopf algebra.\footnote{A bialgebra $(H, \mu: H \otimes H \to H, \nabla: H \to H\otimes H, \eta: \mathbf{k} \to H, \epsilon: H \to \mathbf{k})$ is called a Hopf algebra if there exists a linear map $s: H \to H$ satisfying  $\mu \circ (s \otimes id) \circ \nabla = \eta \circ \epsilon = \mu \circ (id \otimes s) \circ \nabla$. When such a map $s$ exists it is unique and is called the antipode of $H$.}  A morphism $f: \mathbf{C} \to \mathbf{C}'$ in $\mathcal{B}_{\mathbf{k}}$ is a map of underlying dg coalgebras $f: C \to C'$ for which $\Omega f: \Omega C \to \Omega C'$ is a dg bialgebra map. Since maps of bialgebras automatically preserve antipodes it follows that $H_0(\Omega f) : H_0(\Omega C) \to H_0(\Omega C')$ a Hopf algebra map. The class of weak equivalences $\Omega\text{-}q.i.$ consists of those morphisms $f: \mathbf{C} \to \mathbf{C'}$ for which $\Omega f: \Omega C \to \Omega C'$ is a quasi-isomorphism. 

\begin{remark} $A_{\infty}$-algebras with a dg bialgebra structure on its bar construction may be found in the literature under the name of $B_{\infty}$-algebras and $B_{\infty}$-coalgebras are defined dually. Thus $\mathcal{B}_{\mathbf{k}}$ is a subcategory of the category of $B_{\infty}$-coalgebras.
\end{remark}

Forgetting the extra algebraic structure we obtain a functor $F: (\mathcal{B}_{\mathbf{k}}, \Omega\text{-}q.i.) \to (\mathcal{C}_{\mathbf{k}}, \Omega\text{-}q.i.)$. We also consider the functors $H_{\Omega} : (\mathcal{B}_{\mathbf{k}}, \Omega\text{-}q.i.) \to (\mathcal{H}_{\mathbf{k}}, iso.)$ and $Grp: (\mathcal{H}_{\mathbf{k}}, iso.) \to (\mathcal{G}, iso.)$, where $(\mathcal{H}_{\mathbf{k}}, iso.)$ denotes the category of Hopf algebras with isomorphisms and $(\mathcal{G}, iso.)$  the category of groups with isomorphisms, defined on objects by $H_{\Omega}(\mathbf{C}):=H_0(\Omega C)$ and 
$$Grp (H, \mu, \nabla, \eta, \epsilon) :=\{g \in H: \epsilon(g)= 1_{\mathbf{k}} \text{ and }\nabla(g)= g \otimes g \},$$  respectively. These functors are defined on morphisms in the obvious way. $Grp$ is called the group-like elements functor. 

Let $\pi_1: (\mathcal{Set}_{\Delta}^0, w.h.e.) \to (\mathcal{G}, iso.)$ be the functor given by $\pi_1(S):=\pi_1(|S|,x)$, for any $S \in \mathcal{Set}_{\Delta}^0$ with $S_0=\{x\}$, where $\pi_1(|S|,x)$ denotes the fundamental group of the pointed space $(|S|,x)$.

\begin{theorem} \label{thm2}
There exists functors between categories with weak equivalences $$B: (\mathcal{Set}_{\Delta}^0, w.h.e.)  \to (\mathcal{B}_{\mathbf{k}}, \Omega\text{-}q.i.)$$ 
and
$$F: (\mathcal{B}_{\mathbf{k}}, \Omega\text{-}q.i.) \to  (\mathcal{C}_{\mathbf{k}}, \Omega\text{-}q.i.)$$
such that  
\\
(a) $F \circ B= C^K$, and 
\\
(b) there is a natural isomorphism of functors $\pi_1 \cong  Grp \circ H_{\Omega} \circ B$. 
\end{theorem}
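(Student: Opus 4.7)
The plan is to construct $B$ as a lift of $C^K$ that equips each $C^K(S)$ with the extra data making $\Omega C^K(S)$ into a dg bialgebra, and to take $F$ to be the obvious forgetful functor. The starting point is Theorem~\ref{thm1}(b): the dg algebra $\Omega C^K(S)$ is quasi-isomorphic to $\bar C(\text{Sing}(\Omega_x|S|))$, and the latter carries a natural dg bialgebra structure built from the Alexander--Whitney diagonal on $\text{Sing}(\Omega_x|S|)$ together with the Pontrjagin product induced by loop concatenation. To produce a strictly natural coproduct $\nabla\colon\Omega C^K(S)\to\Omega C^K(S)\otimes\Omega C^K(S)$, the idea is to exhibit a natural $B_\infty$-coalgebra structure on singular chains refining Alexander--Whitney: by bar--cobar duality, such data is exactly a dg bialgebra structure on the cobar construction. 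Setting $B(S):=(C^K(S),\partial,\Delta,\nabla,\eta,\epsilon)$ with the natural augmentation and coaugmentation gives a candidate object, and one checks that $H_0(\Omega C^K(S))$ admits an antipode (ultimately induced by loop inversion on $\Omega_x|S|$) so that $B(S)$ lies in $\mathcal B_{\mathbf k}$.

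Naturality of $B$ in $S$ follows because every ingredient---normalized chains, the Kan replacement $K$, the $B_\infty$-coalgebra operations, and the cobar functor---is natural in $S$. Part (a), $F\circ B=C^K$, is then immediate since $F$ only forgets $(\nabla,\eta,\epsilon)$. For the preservation of weak equivalences, a weak homotopy equivalence $f\colon S\to S'$ in $\mathcal{Set}_\Delta^0$ yields a quasi-isomorphism $\Omega C^K(f)$ of dg algebras by Theorem~\ref{thm1}, which is precisely the defining condition for $f$ to lie in $\Omega$-$q.i.$ inside $\mathcal B_{\mathbf k}$.

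For part (b), the natural isomorphism $\pi_1\cong Grp\circ H_\Omega\circ B$ proceeds through a chain of identifications. Using the dg algebra quasi-isomorphism of Theorem~\ref{thm1}(b) and the fact that $\nabla$ transports to the loop-space bialgebra structure, one obtains $H_\Omega(B(S))\cong H_0(\bar C(\text{Sing}(\Omega_x|S|)))\cong\mathbf k[\pi_0(\Omega_x|S|)]\cong\mathbf k[\pi_1(|S|,x)]$ as bialgebras, the target being the standard Hopf structure on a group algebra (so the Hopf-ness required in the definition of $\mathcal B_{\mathbf k}$ is automatic). Applying $Grp$ recovers the group-like elements of $\mathbf k[\pi_1(|S|,x)]$, which are precisely the group elements themselves, and the isomorphism is natural in $S$ because each step is.

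The main obstacle is the construction of $\nabla$: one must produce a $B_\infty$-coalgebra structure on $C(S)$ that is strictly natural in $S$ and that, after applying cobar, recovers the Pontrjagin--diagonal bialgebra of $\Omega_x|S|$ up to a canonical dg-bialgebra quasi-isomorphism. Upgrading the homotopically symmetric Alexander--Whitney coproduct and the Adams equivalence of Theorem~\ref{thm1} to such a strict, functorial chain-level model is where the higher-categorical rectification machinery referred to by the authors enters; once this package is in place, the identification of $H_0$ with the group algebra of $\pi_1$ and the extraction of $\pi_1$ via group-like elements is essentially formal.
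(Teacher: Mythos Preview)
Your overall architecture is right: $B$ should be $C^K$ plus the extra data $(\nabla,\eta,\epsilon)$, $F$ is the forgetful functor, preservation of weak equivalences comes from Theorem~\ref{thm1}, and part (b) should come down to $H_0(\Omega C^K(S))\cong\mathbf{k}[\pi_1(S)]$ as Hopf algebras followed by $Grp(\mathbf{k}[G])\cong G$. But two points need attention.

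First, you correctly flag the construction of $\nabla$ as ``the main obstacle'' and then defer it to unspecified ``higher-categorical rectification machinery.'' This is exactly the gap the paper fills, and it does so concretely rather than by rectification. The coproduct is produced via the cubical model: one has a natural isomorphism of dg algebras $\Omega C(S)\cong\Lambda(S)=C_{\square}(\mathfrak{C}_{\square_c}(S)(x,x))$ (Proposition~9), and the normalized cubical chains of any monoid in cubical sets with connections carry a strictly natural, strictly coassociative Serre-type diagonal $\Delta_\square$ which is an algebra map. Transporting $\Delta_\square$ across that isomorphism gives $\nabla$ on the nose, with no homotopy-coherent straightening required. Your invocation of a $B_\infty$-coalgebra structure on chains is closer to the argument for Theorem~\ref{thm3} (via Kadeishvili's homotopy $G$-coalgebra operations inside the surjection operad), not to how Theorem~\ref{thm2} itself is proved.

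Second, in part (b) you want to transport the bialgebra structure along the quasi-isomorphism $\Omega C^K(S)\simeq \bar C(\text{Sing}(\Omega_x|S|))$ of Theorem~\ref{thm1}(b). That result is only asserted as a dg \emph{algebra} quasi-isomorphism, so ``$\nabla$ transports to the loop-space bialgebra structure'' is not automatic and would itself require proof. The paper avoids this by checking directly, from the explicit cubical description of $\nabla$ and the explicit isomorphism $H_0(\Omega C^K(S))\cong\mathbf{k}[\pi_1(S)]$ of Corollary~10, that the induced bialgebra on $H_0$ matches the standard Hopf structure on the group ring (Proposition~12). With that in hand, the antipode from loop reversal and the identification $Grp(\mathbf{k}[\pi_1(S)])\cong\pi_1(S)$ finish the argument exactly as you outline.
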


\begin{remark} In the case when $C=C(S)$ for $S$ a $1$-connected simplicial set, so $\Omega C$ is a connected dg algebra, a coproduct $\nabla: \Omega C \to \Omega C \otimes \Omega C$ was previously constructed by Baues in \cite{Ba98}. In this article, we will describe how to extend this dg bialgebra structure to any $0$-connected simplicial set $S$.
\end{remark}

The dg bialgebra structure on the cobar construction of the dg coalgebra $C(S)$ may be understood as part of the $E_{\infty}$-coalgebra structure of $C(S)$ describing the homotopical symmetry of the Alexander-Whintey diagonal approximation. Let $\chi$ be the surjection dg operad as described in \cite{BeFr04}. The surjection operad $\chi$ is an $E_{\infty}$-operad and there is a natural $\chi$-coalgebra structure on the normalized chains $C(S)$ of any simplicial set $S$ extending the Alexander-Whitney coproduct. This rich algebraic structure on the chains of a simplicial set has been described in \cite{BeFr04}, \cite{McSm02}, and others. There is a natural map of operads $\rho: As \to \chi$, where $As$ is the associative operad, and for every $\chi$-coalgebra $\mathbf{C}$ we denote by $C$ the dg coalgebra ($As$-coalgebra) obtained by pulling back the action along $\rho$. We call $C$ the underlying dg coalgebra of the $\chi$-coalgebra $\mathbf{C}$. 

By Remark 1 in \cite{Ka03}, for any $\chi$-coalgebra there is an underlying homotopy $G$-coalgebra structure. In section 2.2 of \cite{Ka03} it is explained how any homotopy $G$-coalgebra gives rise to a dg bialgebra structure on the cobar construction of the underlying dg coassociative coalgebra. Hence, for any $\chi$-coalgebra $\mathbf{C}$ there is an induced dg bialgebra structure on $\Omega C$. We refer to \cite{Ka03} for the definition of a homotopy $G$-coalgebra structure and further details.

Let $\mathcal{E}_{\mathbf{k}}$ be the category of $\chi$-coalgebras consisting of $\chi$-coalgebras $\mathbf{C}$ whose underlying dg coalgebra $C$ has the property that $H_0(\Omega C)$ admits an antipode making it a Hopf algebra. Any map $f: \mathbf{C} \to \mathbf{C}'$ of $\chi$-coalgebras induces a map of dg bialgebras $\Omega f: \Omega C \to \Omega C$ which in turn induces a map of Hopf algebras $H_0(\Omega f): H_0(\Omega C) \to H_0(\Omega C)$. Denote by $\Omega\text{-}q.i.$ the class of those maps $f: \mathbf{C} \to \mathbf{C}'$ in $\mathcal{E}_{\mathbf{k}}$ for which  $\Omega f: \Omega C \to \Omega C$ is a quasi-isomorphism.

The following statement says that we may further lift $B: (\mathcal{Set}_{\Delta}^0, w.h.e.)  \to (\mathcal{B}_{\mathbf{k}}, \Omega\text{-}q.i.)$ to $(\mathcal{E}_{\mathbf{k}}, \Omega\text{-}q.i.)$ resulting in a functor which encodes both the $\chi$-coalgebra structure and the fundamental group.

\begin{theorem} \label{thm3}
There exists functors between categories with weak equivalences $$E : (\mathcal{Set}_{\Delta}^0, w.h.e.) \to (\mathcal{E}_{\mathbf{k}}, \Omega\text{-}q.i.)$$
and
$$G :(\mathcal{B}_{\mathbf{k}}, \Omega\text{-}q.i.)\to (\mathcal{E}_{\mathbf{k}}, \Omega\text{-}q.i.)$$
such that 
\\
(a) for any $S \in \mathcal{Set}_{\Delta}^0$, $E(S)$ is the $\chi$-coalgebra extending the dg coalgebra $C(K(S))$ as described in \cite{BeFr04}, and
\\
(b) there is a natural isomorphism of functors $G \circ E \cong B$. 
\end{theorem}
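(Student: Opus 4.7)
The plan is to construct $E$ directly from the Berger--Fresse $\chi$-coalgebra structure on normalized chains applied to the fibrant replacement, construct $G$ via Kadeishvili's passage from $\chi$-coalgebras to dg bialgebras on the cobar, and then observe that the functor $B$ of Theorem \ref{thm2} was itself constructed along this exact route, so that $G \circ E$ agrees with $B$ up to canonical natural isomorphism.

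Concretely, define $E(S)$ on objects to be the $\chi$-coalgebra of \cite{BeFr04} applied to $K(S) = \text{Sing}(|S|,x)$; this is forced by condition (a). For this to land in $\mathcal{E}_{\mathbf{k}}$ one must check that $H_0(\Omega C(K(S)))$ admits an antipode, and this follows from Theorem \ref{thm2}(b) together with Theorem \ref{thm1}(b), which identify $H_0(\Omega C(K(S)))$ with the group algebra $\mathbf{k}[\pi_1(|S|,x)]$. Functoriality and preservation of weak equivalences follow in two steps: naturality of the Berger--Fresse construction turns a simplicial map $f: S \to T$ into a $\chi$-coalgebra map $E(f)$ whose underlying dg coalgebra map is $C(K(f)) = C^K(f)$; and if $f$ is a weak homotopy equivalence then $C^K(f)$ is an $\Omega$-quasi-isomorphism by Theorem \ref{thm1}(a), so $E(f)$ is a weak equivalence in $(\mathcal{E}_{\mathbf{k}}, \Omega\text{-}q.i.)$ by the very definition of the latter.

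For $G$, take a $\chi$-coalgebra $\mathbf{C} \in \mathcal{E}_{\mathbf{k}}$, extract its underlying homotopy $G$-coalgebra via \cite[Remark 1]{Ka03}, and then apply \cite[Section 2.2]{Ka03} to obtain a coassociative coproduct $\nabla \colon \Omega C \to \Omega C \otimes \Omega C$ that is a dg algebra map and makes $\Omega C$ a dg bialgebra compatibly with the canonical unit and counit. The hypothesis $\mathbf{C} \in \mathcal{E}_{\mathbf{k}}$ ensures $H_0(\Omega C)$ is a Hopf algebra, so $G(\mathbf{C}) := (C, \partial, \Delta, \nabla, \eta, \epsilon)$ is an object of $\mathcal{B}_{\mathbf{k}}$. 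On morphisms, naturality of both Kadeishvili constructions shows that a $\chi$-coalgebra map induces a $\mathcal{B}_{\mathbf{k}}$-morphism, and because the underlying dg coalgebra map is unchanged by $G$, a weak equivalence in $(\mathcal{E}_{\mathbf{k}}, \Omega\text{-}q.i.)$ maps to one in $(\mathcal{B}_{\mathbf{k}}, \Omega\text{-}q.i.)$.

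Part (b), the natural isomorphism $G \circ E \cong B$, is then essentially a tautology once one recalls how $B$ was produced in the proof of Theorem \ref{thm2}: the dg bialgebra structure on $\Omega C^K(S)$ was precisely the one obtained from the $\chi$-coalgebra structure of \cite{BeFr04} on $C(K(S))$ via the homotopy $G$-coalgebra route of \cite{Ka03}. Thus $G(E(S))$ and $B(S)$ have literally the same underlying data, and the natural isomorphism is the identity on underlying dg coalgebras and on $\nabla$, $\eta$, $\epsilon$. The only step requiring genuine verification is that Kadeishvili's procedure, when applied to the homotopy $G$-coalgebra underlying a Berger--Fresse $\chi$-coalgebra, reproduces the same $\nabla$ used in the definition of $B$; this is a matter of matching the explicit formulas, and is the main point where care is needed, since a different choice of operadic extraction would give a different but $\Omega$-quasi-isomorphic dg bialgebra. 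As long as $B$ is defined using precisely the Kadeishvili recipe (which is the natural extension of the simply connected construction of Baues cited in Remark 2.4), the comparison is a direct unwinding of definitions rather than a substantive computation.
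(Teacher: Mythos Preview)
Your constructions of $E$ and $G$ match the paper's: $E(S)$ is the Berger--Fresse $\chi$-coalgebra on $C(K(S))$, and $G$ extracts from the $F_2\chi$ operations $E^{1,q}$ the Kadeishvili coproduct $\nabla_{\mathbf{C}}$ on $\Omega C$. Your checks that $E$ lands in $\mathcal{E}_{\mathbf{k}}$ (via the Hopf algebra identification with $\mathbf{k}[\pi_1]$) and that both functors preserve $\Omega$-quasi-isomorphisms are also the same as the paper's.

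The gap is in part (b). You claim that $G\circ E \cong B$ is ``essentially a tautology'' because ``the dg bialgebra structure on $\Omega C^K(S)$ was precisely the one obtained from the $\chi$-coalgebra structure \ldots\ via the homotopy $G$-coalgebra route of \cite{Ka03}.'' That is not how $B$ is defined in this paper. In the proof of Theorem~\ref{thm2}, $B(S)$ is the dg bialgebra structure on $\Omega(C^K(S))$ supplied by Proposition~11, which comes from the \emph{cubical} Serre diagonal $\Delta_{\square}$ on the cubical monoid with connections $\mathfrak{C}_{\square_c}(K(S))(x,x)$, transported along the isomorphism $\Lambda(K(S))\cong \Omega(C^K(S))$ of Proposition~9. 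Hence $G\circ E \cong B$ is precisely the statement that the Kadeishvili coproduct $\nabla_{E(S)}$ coincides (naturally) with the cubical coproduct $\nabla$ under that identification. The paper states this as the content of the proof and points, in the subsequent remark, to \cite{Qu16} for the comparison with Baues' construction in the simply connected case; it is the one genuinely nontrivial step, not an unwinding of definitions. Your closing hedge (``as long as $B$ is defined using precisely the Kadeishvili recipe'') shows the right instinct, but its premise is false here, so the argument as written does not establish (b).
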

\begin{remark}

We could have combined Theorems \ref{thm2} and \ref{thm3} into a single statement. However, we have decided to separate it into two statements in order to elucidate the fact that we do not need the full $E_{\infty}$-coalgebra structure of the singular chains to recover the fundamental group but only part of it. 

\end{remark}
Note that Theorem 2, together with $G \circ E \cong B$, implies that there is a natural isomorphism of functors $\pi_1 \cong Grp \circ H_{\Omega} \circ G \circ E$.
Hence, for any $S \in \mathcal{Set}_{\Delta}^0$, $E(S)$ knows the fundamental group of $|S|$ and the $E_{\infty}$-coalgebra structure on the chains of $S$. Moreover, this data may be recovered from any $\mathbf{C} \in \mathcal{E}_{\mathbf{k}}$  which is $\Omega$-quasi-isomorphic to $E(S)$. We finish this section by posing the question which is the main motivation for our work.

\begin{question} Let $\mathbf{k}=\mathbb{Z}$ and $S, S' \in Set^0_{\Delta}$. Are the $E_{\infty}$-coalgebras $E(S)$ and $E(S')$ $\Omega$-quasi-isomorphic if and only if $S$ and $S'$ are weakly homotopy equivalent?
\end{question}
We believe the answer to this question should be positive under mild conditions. We propose the following outline:
\\

\textit{Step 1.} For any dg coassociative coalgebra $C$ we may form a twisted tensor product complex $C \otimes_{\iota} H_0(\Omega C)$  via the universal twisting cochain $\iota: C \hookrightarrow \Omega C$ by considering $H_0(\Omega C)$ as a left $\Omega C$-module. When $C$ is the dg coalgebra of singular chains on $|S|$, the twisted tensor product $C \otimes_{\iota} H_0(\Omega C)$ is a chain complex model for the universal cover of $|S|$. If $C$ is the underlying dg coassociative coalgebra of a $\chi$-coalgebra $\mathbf{C}$ construct a natural $E_{\infty}$-coalgebra structure on the chain complex $C \otimes_{\iota} H_0(\Omega C)$ such that when $C=C(S)$ such a structure is equivalent to the $E_{\infty}$-coalgebra structure of the chains on the universal cover of $|S|$. This construction will describe the $E_{\infty}$-coalgebra structure of the chains on the universal cover of a space $X$ in terms of the $E_{\infty}$-coalgebra structure of the chains on $X$. 
\\

\textit{Step 2.} Show that if $\mathbf{C}$ and $\mathbf{C'}$ are $\Omega$-quasi-isomorphic $\chi$-coalgebras then their twisted tensor products $C \otimes_{\iota} H_0(\Omega C)$ and $C' \otimes_{\iota} H_0(\Omega C')$ are quasi-isomorphic as $E_{\infty}$-coalgebras in an equivariant sense with respect to the right $H_0(\Omega C)$-module structure.  
\\

\textit{Step 3.} Apply an equivariant version of Mandell's theorem for simply connected spaces to conclude that if $E(S)$ and $E(S')$ are $\Omega$-quasi-isomorphic then the universal covers of $S$ and $S'$ are weakly homotopy equivalent in a way compatible with the action of the fundamental group $\pi_1(S) \cong \pi_1(S')$. Then deduce that the weak homotopy equivalence between universal covers descends to a weak homotopy equivalence between the base spaces.
\\

Theorems 1,2, and 3 describe a sequence of lifts of the functor of simplicial chains with Alexander-Whitney coproduct as described by the following commutative diagram.

\vspace{35pt}
\phantom{aaaaaaa}
\begin{tikzcd}[column sep=large]
& \phantom{aaaaa}(\mathcal{E}_{\mathbf{k}}, \Omega\text{-}q.i.)\arrow[d, shift left=5, ""]\\
& \phantom{aaaaa}(\mathcal{B}_{\mathbf{k}}, \Omega\text{-}q.i.)\arrow[d, shift left=5, ""]\\
& \phantom{aaaaa}(\mathcal{C}_{\mathbf{k}}, \Omega\text{-}q.i.)\ar[d, shift left=5, ""]\\
\phantom{sssssssssss}(\mathcal{Set}^0_{\Delta}, w.h.e.)\phantom{aaaaaaa} \ar[r, "C \circ K", ]\ar[ru, "C^K", very near end]\arrow[ruu,  "B", near end]\arrow[ruuu,  "E", near end]& \phantom{aaaaa}(\mathcal{C}_{\mathbf{k}},q.i.)
\end{tikzcd}\\

\section{Proofs}
In this section we prove theorems 1,2, and 3. We use the notation introduced in the previous section as well as language and basic results from the theory of higher categories. 

\subsection{Theorem 1: the cobar construction as a dg algebra model for the based loop space of a path connected space} Theorem 1 follows from constructions and results of \cite{RiZe16}, where we explain the relationship between Lurie's rigidification functor $\mathfrak{C}: \mathcal{Set}_{\Delta} \to \mathcal{Cat}_{\Delta}$ (we recall its definition below), the cobar functor $\Omega: \mathcal{C}_{\mathbf{k}} \to \mathcal{A}_{\mathbf{k}}$, and the based loop space. In this subsection we give a condensed and relatively self contained presentation, following \cite{RiZe16}, explaining how Theorem 1 may be deduced from properties of the functor $\mathfrak{C}$.

Denote by $\mathcal{Cat}_{\Delta}$ the category of simplicial cateogries, i.e. categories enriched over the monoidal category $(\mathcal{Set}_{\Delta}, \times)$ of simplicial sets with cartesian product. Denote by $ho: \mathcal{Cat}_{\Delta} \to \mathcal{Cat}$ the functor induced by applying $\pi_0: \mathcal{Set}_{\Delta} \to \mathcal{Set}$ at the level of morphisms. For any $\mathcal{C} \in \mathcal{Cat}_{\Delta}$, $ho(\mathcal{C}) \in \mathcal{Cat}$ is called the homotopy category of $\mathcal{C}$. A morphism $F: \mathcal{C} \to \mathcal{D}$ of simplicial categories is called a weak equivalence if $ho(F): ho(\mathcal{C}) \to ho(\mathcal{D})$ is an essentially surjective functor of ordinary categories and for any two objects $x,y \in \mathcal{C}$, $F: \mathcal{C}(x,y) \to \mathcal{D}(F(x), F(y))$ is a weak homotopy equivalence of simplicial sets.

We recall the definition of 
\begin{equation}
\mathfrak{C}: \mathcal{Set}_{\Delta} \to \mathcal{Cat}_{\Delta}
\end{equation} following \cite{DuSp11}. Given any $S \in \mathcal{Set}_{\Delta}$ define a simplicial category $\mathfrak{C}(S)$ which has as objects the elements of $S_0$ and for any $x , y \in S_0$ the simplicial set of morphisms is defined by glueing a collection of simplicial cubes according to the following colimit in $Set_{\Delta}$

\begin{equation} \label{necs}
\mathfrak{C}(S)(x,y) := \underset{T \to S \in (Nec \downarrow S)_{x,y} }{\text{colim}}  (\Delta^{1})^{\times V_T} .
\end{equation}
We explain the above formula. The category of necklaces, $Nec$, has as objects simplicial sets of the form $T= T^1 \vee ... \vee T^k$ where $T^i = \Delta^{t_i}$ for some integer $t_i \geq 1$ and the identifications in the wedge product are given by glueing the last vertex of $T^i$ with the first vertex of $T^{i+1}$ for $i=1,...,k-1$. Since the vertices of each $T^i$ are ordered, the set $T_0$ of vertices in $T$ has a natural ordering. The morphisms in $Nec$ are defined to be maps $f: T \to T'$ of simplicial sets which preserve first and last vertices, i.e. satisfying $f(\alpha_T)=\alpha_{T'}$ and $f(\omega_T)=\omega_{T'}$, where $\alpha_T$ and $\omega_T$ denote the first and last vertices of $T$, respectively. The category $(Nec \downarrow S)_{x,y}$ is the full subcategory of the over category $Nec \downarrow S$ whose objects are those maps $g: T \to S$ satisfying $g(\alpha_T)=x$ and $g(\omega_T)=y$. To any $T= T^1 \vee ... \vee T^k \in Nec$ we may associate functorially a simplicial cube $(\Delta^1)^{\times V_T}$, where $V_T= t_1+...+t_k-k$, as follows. Consider the functor $Nec \to Set_{\Delta}$ which sends a necklace $T \in Nec$ to the simplicial set $N(P_T) \in Set_{\Delta}$, where $N(P_T)$ is the nerve of the category $P_{T}$ whose objects are subsets $U \subset T_0$ such that $\alpha_{T_0} \in U$ and $\omega_{T^i} \in U$ for all $i=1,...,k$, and morphisms are inclusions of sets. The assignment $T \mapsto N(P_T)$ is natural with respect to morphisms in $Nec$ and $N(P_T)$ is naturally isomorphic to $(\Delta^1)^{\times V_T}$. 

Composition in $\mathfrak{C}(S)$ is given as follows. For any $x,y \in S_0$, each simplex $\sigma \in \mathfrak{C}(S)(x,y)_n$ is given by an equivalence class $\sigma= [f: T \to S, \xi]$ where $(f: T\to S) \in (Nec \downarrow S)_{x,y}$ and $\xi\in ((\Delta^1)^{\times {V_T}})_n$. Then the composition rule
$$(\mathfrak{C}(S)(y,z) \times \mathfrak{C}(S)(x,y))_n   = \mathfrak{C}(S)(y,z)_n \times \mathfrak{C}(S)(x,y)_n \to \mathfrak{C}(S)(x,z)_n$$
is given by 
$$[f': T' \to S, \xi'] \times [f: T \to S, \xi] \mapsto [ f \vee f': T \vee T' \to S, \xi \times \xi'].$$

\begin{remark}  The functor  $\mathfrak{C}: \mathcal{Set}_{\Delta} \to \mathcal{Cat}_{\Delta}$ is defined in Chapter 1.1.5 of \cite{Lu09} by first declaring $\mathfrak{C}(\Delta^n) \in \mathcal{Cat}_{\Delta}$ to have the set $\{0,...,n\}$ as objects and, if $i<j$, $\mathfrak{C}(S)(i,j)$ is defined as the nerve of the category $P_{i,j}$ whose objects are subsets $U \subseteq \{i, i+1,..., j\}$ and morphisms are inclusions of sets, $\mathfrak{C}(S)(i,i)=\Delta^0$, and empty otherwise. This data defines a simplicial category $\mathfrak{C}(\Delta^n)$ with composition induced by taking union of subsets. Then extend to any arbitrary simplicial set $S$ in the usual way by $$\mathfrak{C}(S):=  \underset{\Delta^n \to S}{\text{colim}} \mathfrak{C}(\Delta^n).$$ It follows from Theorem 1.4 and Corollary 4.4 of \cite{DuSp11} that the definition given above is equivalent to Lurie's definition for $\mathfrak{C}$. In other words, the definition of $\mathfrak{C}(S)$ as a colimit in $\mathcal{Cat}_{\Delta}$ is unraveled in \cite{DuSp11} by describing explicitly each morphism space as a colimit in $\mathcal{Set}_{\Delta}$. 
\end{remark}

$\mathfrak{C}$ has a right adjoint $$N_{\Delta}: \mathcal{Cat}_{\Delta} \to \mathcal{Set}_{\Delta}$$ called the homotopy coherent nerve functor. Moreover, the adjunction $(\mathfrak{C}, N_{\Delta})$ forms a Quillen equivalence between the Joyal model structure on $\mathcal{Set}_{\Delta}$ and the Bergner model structure on $\mathcal{Cat}_{\Delta}$. In general, $\mathfrak{C}: \mathcal{Set}_{\Delta} \to \mathcal{Cat}_{\Delta}$ does not send weak homotopy equivalences of simplicial sets to weak equivalences of simplicial categories. However, the Quillen model structure is a left Bousfield localization of the Joyal model structure and we have the following.

\begin{proposition} \label{Kan} If $f: S \to S'$ is a weak homotopy equivalence between Kan complexes $S$ and $S'$ then $\mathfrak{C}(f): \mathfrak{C}(S) \to \mathfrak{C}(S')$ is a weak equivalence of simplicial categories.  
\end{proposition}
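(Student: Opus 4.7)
The plan is to combine the two model-categorical inputs recalled in the paragraph preceding the statement: that $\mathfrak{C}$ is the left adjoint in a Quillen equivalence between the Joyal model structure on $\mathcal{Set}_\Delta$ and the Bergner model structure on $\mathcal{Cat}_\Delta$, and that the Kan--Quillen model structure on $\mathcal{Set}_\Delta$ is a left Bousfield localization of the Joyal structure. Since the cofibrations in the Joyal structure are the monomorphisms, every simplicial set is Joyal-cofibrant, and Ken Brown's lemma guarantees that the left Quillen functor $\mathfrak{C}$ preserves all Joyal weak equivalences between simplicial sets. It therefore suffices to show that a weak homotopy equivalence $f: S \to S'$ between Kan complexes is automatically a Joyal equivalence.

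For this reduction I would invoke the standard property of a left Bousfield localization: a map between objects which are fibrant in the localized structure is a weak equivalence in the localization if and only if it is a weak equivalence in the original structure. Kan complexes are precisely the Kan--Quillen fibrant objects, and they are also Joyal fibrant, since every Kan complex is a quasi-category. Applying this principle to the Bousfield localization of the Joyal structure at the Kan--Quillen structure shows that the weak homotopy equivalence $f$ between the Kan complexes $S$ and $S'$ is a Joyal equivalence. Combining with the previous paragraph, $\mathfrak{C}(f): \mathfrak{C}(S) \to \mathfrak{C}(S')$ is a Bergner weak equivalence of simplicial categories, as desired.

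I do not anticipate a substantive obstacle, as the proposition is essentially a formal consequence of these two inputs together with Ken Brown's lemma. The one point that merits a careful reference, rather than a calculation, is the identification of Kan complexes as the local (equivalently, Kan--Quillen fibrant) objects among the Joyal-fibrant ones; this is standard and is spelled out in \cite{Lu09}. A more hands-on alternative, avoiding the Bousfield localization entirely, would exploit the necklace formula for $\mathfrak{C}(S)(x,y)$ together with a direct comparison to the Moore path space of $|S|$ from $x$ to $y$, deducing the conclusion from the homotopy invariance of path spaces of CW complexes under weak homotopy equivalences of the total space; however, the Bousfield localization route is considerably shorter and is what I would write up.
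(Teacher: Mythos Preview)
Your argument is correct: the combination of Ken Brown's lemma (using that every simplicial set is Joyal-cofibrant) with the fact that weak homotopy equivalences between Kan complexes are Joyal equivalences (via the Bousfield localization description) yields the result cleanly. The paper's own proof is simply a one-line citation to Proposition~17.2.8 of \cite{Rie14}, so you have in effect supplied the argument that the paper defers to the literature; the reasoning you give is essentially the standard one behind that reference.
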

\begin{proof}
This is Proposition 17.2.8 in \cite{Rie14}.
\end{proof}
We shall now describe how the rigidification functor $\mathfrak{C}: \mathcal{Set}_{\Delta} \to \mathcal{Cat}_{\Delta}$ may be understood as a simplicial version of a functor 
\begin{equation}
\mathcal{P}: \mathcal{Top} \to \mathcal{Cat}_{\mathcal{Top}}
\end{equation} from the category of topological spaces to the category of categories enriched over the monoidal category $(\mathcal{Top}, \times)$. For any $X \in \mathcal{Top}$ define $\mathcal{P}(X)$ to be the topological category whose objects are the points of $X$ and, for any two $x,y \in X$, define 
\begin{equation} \label{paths}
\mathcal{P}(X)(x,y)= \{ (\gamma, r) : \gamma: [0,r] \to X, \gamma(0)=x, \gamma(r)=y \},
\end{equation}
the space of (Moore) paths in $X$ from $x$ to $y$ with the compact open topology. Composition in $\mathcal{P}(X)$ is obtained by concatenating paths. The precise relationship between $\mathfrak{C}$ and $\mathcal{P}$ can be stated in terms of the functor
\begin{equation}
 \textbf{Sing}: \mathcal{Cat}_{\mathcal{Top}} \to \mathcal{Cat}_{\Delta}
 \end{equation}
 defined to be identity on objects and by applying the monoidal functor $\text{Sing}: \mathcal{Top} \to \mathcal{Set}_{\Delta}$  that sends a topological space to its singular Kan complex at the level of morphisms.

\begin{proposition} For any path connected topological space $X$ the simplicial categories $\mathfrak{C} ( \emph{Sing}(X) )$ and $\emph{\textbf{Sing}}( \mathcal{P}(X) )$ are weakly equivalent.
\end{proposition}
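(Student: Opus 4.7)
The plan is to construct an explicit simplicial functor $\Phi: \mathfrak{C}(\text{Sing}(X)) \to \textbf{Sing}(\mathcal{P}(X))$ which is the identity on objects and then verify it is a weak equivalence of simplicial categories in the sense introduced at the beginning of this section. Since $\Phi$ is bijective on objects, $ho(\Phi)$ is automatically essentially surjective, so the whole content reduces to showing that for every $x,y \in X$ the induced map on morphism simplicial sets
$$\Phi_{x,y}: \mathfrak{C}(\text{Sing}(X))(x,y) \to \text{Sing}(\mathcal{P}(X)(x,y))$$
is a weak homotopy equivalence of Kan complexes.

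To construct $\Phi$ I would use the colimit description (\ref{necs}). Given a necklace $f: T = \Delta^{t_1} \vee \cdots \vee \Delta^{t_k} \to \text{Sing}(X)$ with $f(\alpha_T) = x$ and $f(\omega_T) = y$, the restriction of $f$ to each piece $T^i$ is a singular simplex $\sigma_i: |\Delta^{t_i}| \to X$, with consecutive $\sigma_i$ matching at endpoints. A point of the cube $(\Delta^1)^{V_T}$ prescribes a time parameter to each of the $V_T$ internal vertices along the spine of $T$, and I would interpret this datum as a piecewise-linear edge-path inside each $|\Delta^{t_i}|$ from its initial to its final vertex, passing through the internal vertices at the assigned times. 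Composing with the $\sigma_i$ and concatenating in the Moore sense yields a Moore path in $X$ from $x$ to $y$. Doing this in families produces a simplicial map $(\Delta^1)^{V_T} \to \text{Sing}(\mathcal{P}(X)(x,y))$ which is natural in $(Nec \downarrow \text{Sing}(X))_{x,y}$ and respects wedges of necklaces, so it descends to the colimit and assembles into the required functor $\Phi$.

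The main obstacle is proving that each $\Phi_{x,y}$ is a weak homotopy equivalence. Using path-translation (available since $X$ is path connected) I would reduce to the diagonal case $y = x$, where $\mathcal{P}(X)(x,x) = \Omega_x X$ is the Moore loop space. To conclude, the plan is to invoke the Quillen equivalence between the Joyal model structure on $\mathcal{Set}_{\Delta}$ and the Bergner model structure on $\mathcal{Cat}_{\Delta}$ via the adjunction $(\mathfrak{C}, N_\Delta)$: both $\text{Sing}(X)$ and $N_\Delta(\textbf{Sing}(\mathcal{P}(X)))$ are Kan complexes modeling the fundamental $\infty$-groupoid of $X$, the latter by a classical comparison between the Moore path space model and the singular $\infty$-groupoid (which can be established either via the standard deformation retraction of $\mathcal{P}(X)(x,y)$ onto ordinary paths, or by directly building a homotopy inverse by subdividing a Moore path into a sequence of short singular simplices). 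Since $\mathfrak{C}$ preserves weak equivalences between Kan complexes by Proposition \ref{Kan}, and the counit $\mathfrak{C} N_\Delta(\textbf{Sing}(\mathcal{P}(X))) \to \textbf{Sing}(\mathcal{P}(X))$ is a weak equivalence of simplicial categories (as the target is Bergner fibrant), the adjunct of this comparison recovers $\Phi$ up to a zigzag of weak equivalences and the claim follows.
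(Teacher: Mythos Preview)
The second half of your proposal---comparing the two simplicial categories through the Quillen equivalence $(\mathfrak{C}, N_\Delta)$, using that both $\text{Sing}(X)$ and $N_\Delta(\textbf{Sing}(\mathcal{P}X))$ are Kan complexes, invoking Proposition~\ref{Kan}, and then that the counit is a weak equivalence on the Bergner-fibrant $\textbf{Sing}(\mathcal{P}X)$---is exactly the paper's argument. The paper makes the step $N_\Delta(\textbf{Sing}(\mathcal{P}X)) \simeq \text{Sing}(X)$ concrete by first passing to the one-object subcategory $\mathbf{\Omega}X$ on a chosen point $b$ and identifying $|N_\Delta(\textbf{Sing}(\mathbf{\Omega}X))|$ with the classifying space $B(\Omega_b X) \simeq X$; this is the ``classical comparison'' you allude to.

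The explicit functor $\Phi$ you build in the first half is not in the paper and is not needed: the proposition only asserts weak equivalence, so a zigzag suffices. Your final claim that the zigzag ``recovers $\Phi$ up to a zigzag of weak equivalences'' would require a separate argument (you would have to check that your hand-built $\Phi$ is homotopic to the adjunct map), but since the zigzag alone already proves the statement, you can simply drop the construction of $\Phi$ and the path-translation reduction entirely. The Adams-style map you sketch is real and interesting, but making it precise and natural in necklaces is nontrivial; the paper sidesteps this by working purely with the zigzag.
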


\begin{proof}
Fix a point $b$ in $X$. Since $X$ is path connected the topological category $\mathcal{P}X$ is weakly equivalent to the topological category $\mathbf{\Omega} X$ which has a single object $b$ and as morphism space $\mathbf{\Omega} X (b,b) = \mathcal{P}(X)(b,b)$. Let $N_{Top}: \mathcal{Cat}_{\mathcal{Top} } \to \mathcal{Set}_{\Delta}$ be the composition $ N_{\Delta} \circ \textbf{Sing}$. The functor $N_{Top}$, called the topological nerve functor, sends weak equivalences of topological categories to weak homotopy equivalence of simplicial sets. Thus, the simplicial sets $N_{Top}(\mathcal{P}X)$ and  $N_{Top}(\mathbf{\Omega} X)$ are Kan weakly equivalent. Moreover, the geometric realization $|N_{Top}(\mathbf{\Omega}X)|$ is a model for $B(\mathbf{\Omega}X)$, the classifying space of the topological monoid of based loops, so it follows that the spaces $|N_{Top}(\mathbf{\Omega}X)|$ and $X$ are naturally weakly homotopy equivalent. It follows that the simplicial sets $N_{Top}(\mathcal{P}X)$ and $\text{Sing}(X)$ are weakly equivalent as well. On the other hand, $N_{Top}(\mathcal{P}X)$ is a Kan complex (since its homotopy category is a groupoid) so Proposition \ref{Kan} implies there is a weak equivalence $\mathfrak{C}(N_{Top}(\mathcal{P}X)) \simeq \mathfrak{C}(\text{Sing}(X))$. Since $\mathfrak{C} \circ N_{\Delta} (\mathcal{C})  \simeq \mathcal{C}$ for any $\mathcal{C} \in \mathcal{Cat}_{\Delta}$ whose mapping spaces are Kan complexes, we have that $\mathfrak{C} (N_{Top}(\mathcal{P}X)) = \mathfrak{C} (N_{\Delta}( \textbf{Sing}( \mathcal{P}X))) \simeq \textbf{Sing}(\mathcal{P}X)$. Hence, the simplicial categories $\mathfrak{C}(\text{Sing}(X))$ and $\textbf{Sing}(\mathcal{P}X)$ are weakly equivalent. 
\end{proof}

For any path connected space $X$ the inclusion $ \text{Sing}(X,b) \hookrightarrow \text{Sing}(X) $ is a weak homotopy equivalence of Kan complexes, so Proposition 6 implies the following

\begin{corollary} For any path connected topological space $X$ with base point $b \in X$, the simplicial categories $\mathfrak{C}(\emph{Sing}(X,b))$ and $\emph{\textbf{Sing}}(\mathbf{\Omega}X)$ are naturally weakly equivalent.
\end{corollary}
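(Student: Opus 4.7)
The plan is to deduce the corollary by combining the preceding proposition with a pointed-to-unpointed comparison on each of the two sides. On the topological side, because $X$ is path connected, the inclusion $\mathbf{\Omega}X \hookrightarrow \mathcal{P}(X)$ is a weak equivalence of topological categories: it is essentially surjective on homotopy categories, since any $x \in X$ is joined to $b$ by a Moore path, and on each morphism space the map $\mathbf{\Omega}X(b,b) \to \mathcal{P}(X)(x,y)$ obtained by pre- and post-composing with chosen Moore paths from $x$ to $b$ and from $b$ to $y$ is a homotopy equivalence. Since $\textbf{Sing}$ applies the monoidal functor $\text{Sing}$ on mapping spaces and $\text{Sing}$ preserves weak homotopy equivalences, one obtains a weak equivalence $\textbf{Sing}(\mathbf{\Omega}X) \simeq \textbf{Sing}(\mathcal{P}(X))$ in $\mathcal{Cat}_{\Delta}$.

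On the simplicial side, I would verify that the inclusion $\text{Sing}(X,b) \hookrightarrow \text{Sing}(X)$ is a weak homotopy equivalence of Kan complexes. The target is Kan by construction, and $\text{Sing}(X,b)$ is Kan by a direct check: for $n \geq 2$ every vertex of $\Delta^n$ already lies in the horn $\Lambda^n_k$, so any filling in $\text{Sing}(X)$ automatically factors through $\text{Sing}(X,b)$, while the one-dimensional case is trivial. The map is a bijection on $\pi_0$ and an isomorphism on each $\pi_n$ at the basepoint, hence a weak homotopy equivalence. Proposition \ref{Kan} then lifts this to a weak equivalence $\mathfrak{C}(\text{Sing}(X,b)) \simeq \mathfrak{C}(\text{Sing}(X))$ of simplicial categories.

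Chaining these two weak equivalences with the one furnished by the preceding proposition yields the zigzag
\[ \mathfrak{C}(\text{Sing}(X,b)) \simeq \mathfrak{C}(\text{Sing}(X)) \simeq \textbf{Sing}(\mathcal{P}(X)) \simeq \textbf{Sing}(\mathbf{\Omega}X), \]
which proves the corollary. I expect the main obstacle to be the naturality clause: each arrow in the zigzag is built either from a canonical inclusion of subobjects or from applying one of the functors $\mathfrak{C}$, $\textbf{Sing}$, hence is strictly natural in pointed maps $(X,b) \to (X',b')$, so the naturality of the overall zigzag is automatic. The most delicate individual step to verify carefully is the weak equivalence $\mathbf{\Omega}X \hookrightarrow \mathcal{P}(X)$, where distinguishing Moore loops from standard loops and checking compatibility with the concatenation composition requires a bit of attention.
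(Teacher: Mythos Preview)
Your proposal is correct and follows essentially the same route as the paper: the corollary is deduced from the preceding proposition by combining the weak equivalence $\text{Sing}(X,b) \hookrightarrow \text{Sing}(X)$ of Kan complexes (passed through $\mathfrak{C}$ via Proposition~\ref{Kan}) with the weak equivalence $\mathbf{\Omega}X \simeq \mathcal{P}X$ already established in the proof of that proposition. You have simply spelled out each step, including the Kan property of $\text{Sing}(X,b)$ and the naturality, in more detail than the paper does.
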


For any $S \in \mathcal{Set}^0_{\Delta}$, $\mathfrak{C}(S)$ is a simplicial category with one object $x$ so the composition rule induces a dg algebra structure on the chain complex $\bar{C}(\mathfrak{C}(S)(x,x))$. Denote this dg algebra by $\Gamma(S)$. 

We proceed by explaining how, for a path connected pointed space $(X,b)$, the dg algebra $\Gamma(\text{Sing}(X,b))$ relates to the cobar construction of the dg coalgebra $C(X):=C(\text{Sing}(X,b))$. The key is to introduce a cubical version of $\mathfrak{C}$ such that upon taking cubical chains on the morphisms we obtain a dg algebra isomorphic to $\Omega C(X)$. We cannot rely on the theory of cubical sets to substitute the simplicial cube $(\Delta^1)^{\times V_T}$ for a standard cube $\square^{V_T}$ in formula 3.1 since not all maps of necklaces are realized by maps of cubical sets. For example, consider the codegeneracy map $s: \Delta^3 \to \Delta^2$ induced by the map of ordered sets  $s: \{0,1,2,3\} \to \{0,1,2\}$ defined by $s(0)=0$, $s(1)=s(2)=1$, and $s(3)=2$. Applying $\mathfrak{C}$ we obtain that $\mathfrak{C}(s): \mathfrak{C}(\Delta^3)(0,3) \to \mathfrak{C}(\Delta^2)(0,2)$ corresponds to a map of simplicial sets $\Delta^1 \times \Delta^1 \to \Delta^1$ which is not realized by a map of cubical sets $\square^2 \to \square^1$. However, $\mathfrak{C}(s)$ may be realized as a map between cubical sets with connections. 

Denote by $\mathbf{1}^n$ the $n$-fold cartesian product of $n$ copies of the category $\mathbf{1}= \{ 0 \to 1 \}$ with two objects and exactly one non-identity morphism. A cubical set with connections is a presheaf over the category $\square_{c}$ whose objects are the categories $\mathbf{1}^n$ and morphisms are generated by the usual cubical co-faces $\delta^{\epsilon}_{j,n}: \mathbf{1}^n \to \mathbf{1}^{n+1}$ ($j=0,1,...,n+1$ and $\epsilon \in \{0,1\}$), cubical co-degeneracies $\varepsilon_{j,n}: \mathbf{1}^n \to \mathbf{1}^{n-1}$ ($j=1,...,n$), together with cubcial co-connections $\gamma_{j,n}: \mathbf{1}^n \to \mathbf{1}^{n-1}$  ($j=1,...,n-1$, $n\geq 2$) defined by
\begin{equation*}
\gamma_{j,n}(s_1,...,s_n)=(s_1,...,s_{j-1},\text{max}(s_j,s_{j+1}),s_{j+2},...,s_n).
\end{equation*}
For a cubical set with connections $Q: \square_c^{op} \to \mathcal{Set}$ we think of the connections $Q(\gamma_{j,n}): Q_{n-1} \to Q_n$ as extra degeneracies. We shall abuse notation and denote the face maps of $Q$ by $\partial^{\epsilon}_{j}= Q(\delta^{\epsilon}_{j,n}): Q_{n+1} \to Q_n$. We refer to \cite{BrHi81} and \cite{RiZe16} for further details.

Let $\square^{n}_c := \text{Hom}_{\square_c}( \_ , \mathbf{1}^n )$ be the standard $n$-cube with connections.  The category of cubical sets with connections forms a (non-symmetric) monoidal category $(\mathcal{Set}_{\square_c}, \otimes)$ with monodical structure given by
 \begin{equation*}
Q \otimes P:= \underset{\sigma: \square_c^n \to Q, \tau: \square_c^m \to P} {\text{colim}} \square^{n+m}_c.
\end{equation*}
Let $\mathcal{Cat}_{\square_c}$ denote the category of categories enriched over $(\mathcal{Set}_{\square_c}, \otimes)$. Define 
$$\mathfrak{C}_{\square_c}: \mathcal{Set}_{\Delta} \to \mathcal{Cat}_{\square_c}$$
by letting the objects of $\mathfrak{C}_{\square_c}(S)$ to be the elements of $S_0$ and for any two $x,y \in S_0$ the cubical set with connections $\mathfrak{C}_{\square_c}(S)(x,y)$ is given by replacing $(\Delta^1)^{V_T}$ with $\square_c^{V_T}$ in formula \ref{necs}, namely
\begin{equation} \label{cubes}
\mathfrak{C}_{\square_c}(S)(x,y) := \underset{T \to S \in (Nec \downarrow S)_{x,y} }{\text{colim}}  \square_c^{V_T}.
\end{equation}
This colimit is well defined since $T \mapsto \square_c^{V_T}$ defines a functor $Nec \to \mathcal{Set}_{\square_c}$, as explained in section 4 of \cite{RiZe16}. 

Composition in $\mathfrak{C}_{\square_c}$ is defined as follows. For any $x, y \in S_0$, each cube $\eta \in \mathfrak{C}_{\square_c}(S)(x,y)_n$ may be represented as an equivalence class $\eta=[g: T \to S, \zeta]$, where $(g: T\to S) \in (Nec \downarrow S)_{x,y}$ and $\zeta \in (\square_c^{V_T})_n$. Define a composition rule
$$(\mathfrak{C}_{\square_c}(S)(y,z) \otimes \mathfrak{C}_{\square_c}(S)(x,y))_n \to \mathfrak{C}_{\square_c}(S)(x,y)_n$$
to be the map induced by 
$$\mathfrak{C}_{\square_c}(S)(y,z)_k \otimes \mathfrak{C}_{\square_c}(S)(x,y)_l \to \mathfrak{C}_{\square_c}(S)(x,y)_{k+l}$$
given by
$$[g': T' \to S, \zeta'] \otimes  [g: T \to S, \zeta] \mapsto [g \vee g': T \vee T' \to S, \zeta \otimes \zeta' ]$$

\begin{remark} Any cube $\eta \in (\mathfrak{C}_{\square_c}(S)(x,x))_n$ may be represented as the equivalence class $\eta= [g_{\eta}: T_{\eta} \to S, \iota_n]$ where $(g_{\eta}: T_{\eta} \to S ) \in (Nec \downarrow S)$, $V_{T_{\eta}}=n$, and $\iota_n$ is the top dimensional  non-degenerate simplex in $(\square_c^n)_n$.
\end{remark}

Consider the functor $Tr: \mathcal{Set}_{\square_c} \to \mathcal{Set}_{\Delta}$, called triangulation, determined by $Tr(\square^n_c)= (\Delta^1)^{\times n}$, so that for any $K \in \mathcal{Set}_{\square_c}$, $$Tr(K) := \underset{\square_c^n \to K}{\text{colim}} (\Delta^1)^{\times n}$$. Since a natural map (in fact, an isomorphism) $Tr(K) \times Tr(L) \to Tr(K \otimes L)$ for any $K,L \in \mathcal{Set}_{\square_c}$ we obtain an induced functor $$\mathfrak{Tr}: \mathcal{Cat}_{\square_c} \to \mathcal{Cat}_{\Delta}$$ by applying $Tr$ at the level of morphisms. We record a statement which follows immediately from formulas \ref{necs} and \ref{cubes}.
\begin{proposition} There is a natural isomorphism of functors $\mathfrak{C} \cong \mathfrak{Tr} \circ \mathfrak{C}_{\square_c}$.
\end{proposition}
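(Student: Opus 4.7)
The plan is to construct the natural isomorphism pointwise by comparing the defining colimit formulas. Both $\mathfrak{C}(S)$ and $\mathfrak{Tr}(\mathfrak{C}_{\square_c}(S))$ are simplicial categories whose set of objects is $S_0$, since $\mathfrak{Tr}$ is by definition the identity on objects; so it suffices to produce an isomorphism on hom simplicial sets which is compatible with composition and natural in $S$.

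For the hom spaces, I would use that the triangulation functor $Tr \colon \mathcal{Set}_{\square_c} \to \mathcal{Set}_{\Delta}$ is defined as a left Kan extension along the Yoneda embedding of the functor $\square_c^n \mapsto (\Delta^1)^{\times n}$, and therefore commutes with all colimits. Applied to the formula for $\mathfrak{C}_{\square_c}(S)(x,y)$, this gives
\[ \mathfrak{Tr}(\mathfrak{C}_{\square_c}(S))(x,y) \;=\; Tr\!\Big( \underset{T \to S \in (Nec \downarrow S)_{x,y}}{\mathrm{colim}} \square_c^{V_T} \Big) \;\cong\; \underset{T \to S \in (Nec \downarrow S)_{x,y}}{\mathrm{colim}} Tr(\square_c^{V_T}) \;=\; \underset{T \to S \in (Nec \downarrow S)_{x,y}}{\mathrm{colim}} (\Delta^1)^{\times V_T}, \]
which is precisely the definition of $\mathfrak{C}(S)(x,y)$. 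Naturality in $S$ follows from the universal property of colimits together with the fact that $(Nec \downarrow -)_{x,y}$ is covariantly functorial.

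The remaining verification is compatibility with composition. Here I invoke the natural monoidal comparison $Tr(K) \times Tr(L) \xrightarrow{\cong} Tr(K \otimes L)$ stated in the text. Under the identification above, a generator $[g' \colon T' \to S, \zeta'] \otimes [g \colon T \to S, \zeta] \mapsto [g \vee g' \colon T \vee T' \to S, \zeta \otimes \zeta']$ of the composition in $\mathfrak{C}_{\square_c}(S)$ is transported, via $Tr$, to $[g \vee g', \xi \times \xi']$ with $\xi = Tr(\zeta)$ and $\xi' = Tr(\zeta')$, and this is exactly the composition rule defining $\mathfrak{C}(S)$. The key compatibility is the tautological identification $\square_c^{V_T} \otimes \square_c^{V_{T'}} \cong \square_c^{V_{T \vee T'}}$ matching $(\Delta^1)^{\times V_T} \times (\Delta^1)^{\times V_{T'}} \cong (\Delta^1)^{\times V_{T \vee T'}}$ under triangulation, which holds because $V_{T \vee T'} = V_T + V_{T'}$.

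I expect no substantive obstacles: the entire argument is formal, resting only on the fact that $Tr$ is a colimit-preserving, monoidal functor sending $\square_c^n$ to $(\Delta^1)^{\times n}$. The most delicate bookkeeping is checking that the monoidal comparison interacts correctly with the wedge of necklaces, but this reduces to the identity $V_{T \vee T'} = V_T + V_{T'}$ on dimensions of cubes and is established by inspection of the respective defining colimits.
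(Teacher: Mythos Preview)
Your proposal is correct and follows the same approach as the paper: the paper simply records the statement as following ``immediately from formulas \ref{necs} and \ref{cubes},'' and your argument is a faithful unpacking of that claim, using that $Tr$ is a colimit-preserving monoidal functor sending $\square_c^n$ to $(\Delta^1)^{\times n}$.
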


Let $C_{\square}: \mathcal{Set}_{\square_c} \to \mathcal{Ch}_{\mathbf{k}}$ denote the normalized cubical chain complex. More precisely, for any $Q \in Set_{\square_c}$,  consider $C'_{\square}(Q)$ the chain complex with $(C'_{\square}(Q))_n$ defined to be the free $\mathbf{k}$-module generated by elements of $Q_n$ with differential given on any $\sigma \in Q_n$ by $\partial (\sigma):= \sum_{j=1}^n(-1)^{j}(\partial^1_{j}(\sigma) - \partial^0_{j}(\sigma))$. Let $D_nQ$ be the submodule of $(C'_{\square}(Q))_n$ which is generated by those cells in $Q_n$ which are the image of a degeneracy or of a connection map $Q_{n-1} \to Q_n$. The graded module $D_*Q$ forms a subcomplex of $C'_{\square}(Q)$. Then $C_{\square}(Q)$ is defined to be the quotient complex $C'_{\square}(Q) /D_*(Q)$.

\begin{remark} In \cite{BGSW18} it is shown that the chain complex of a cubical set with connections obtained by modding out by degenerate cubes is quasi-isomorphic to the chain complex obtained by modding degenerate cubes together with the images of connections; so these two normalization procedures yield chain complexes which compute the same homology. Note that, in general, the unnormalized chain complex of a cubical set $K$ does not calculate the singular homology of  the geometric realization $|K|$, so a normalization step is always needed. In particular, the un-normalized cubical chain complex of a point will have non-zero higher homology groups. In the appendix of \cite{BGSW18} it is shown that the homology of the normalized chain complex of a cubical set with connections $K$ (defined in either way by modding out connections or not) computes the singular homology of $|K|$, the geometric realization of $K$, which may also be defined, up to homotopy equivalence, in two ways: using the connections as glueing data or not using them. 
\end{remark}

For any $S \in \mathcal{Set}^0_{\Delta}$, $\mathfrak{C}_{\square_c}(S) \in \mathcal{Cat}_{\square_c}$ has a single object $x \in S_0$. Since  $C_{\square}: \mathcal{Set}_{\square_c} \to \mathcal{Ch}_{\mathbf{k}}$ is a monoidal functor there is a dg algebra structure on  $C_{\square}(\mathfrak{C}_{\square_c}(S)(x,x))$  with product induced by the composition rule in $\mathfrak{C}_{\square_c}(S)$. Denote this dg algebra by $\Lambda(S)$. The following proposition may be found as Theorem 7.1 in \cite{RiZe16}. We include a sketch of its proof for completeness. 

\begin{proposition} 
For any $S \in \mathcal{Set}^0_{\Delta}$ there is an isomorphism of dg algebras $\Lambda(S) \cong \Omega (C(S)).$
\end{proposition}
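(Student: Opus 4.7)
The plan is to construct an explicit dg algebra isomorphism $\phi\colon \Omega C(S) \to \Lambda(S)$ that on algebra generators sends $s^{-1}\sigma$ to the class of the top cell of the cube associated to the singleton necklace $\sigma\colon \Delta^n \to S$, and extend to all of $\Omega C(S) = T(s^{-1}\widetilde{C}(S))$ using the free tensor algebra structure on the source. Multiplicativity on the target is then automatic: the composition in $\mathfrak{C}_{\square_c}(S)(x,x)$ is induced by concatenation of necklaces $T \vee T'$, and under the canonical isomorphism $\square_c^{V_T} \otimes \square_c^{V_{T'}} \cong \square_c^{V_T + V_{T'}}$ this concatenation sends tensor products of top cells to top cells, which upon applying the monoidal functor $C_\square$ produces the dg algebra product on $\Lambda(S)$.

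To see $\phi$ is a bijection on the natural bases, note that $\Omega C(S)$ has a $\mathbf{k}$-basis indexed by reduced tensor monomials $s^{-1}\sigma_1 \otimes \cdots \otimes s^{-1}\sigma_k$ with each $\sigma_i$ a non-degenerate simplex of $S$ of dimension at least one, and the same data indexes a basis of $\Lambda(S)$ after the colimit identifications defining $\mathfrak{C}_{\square_c}(S)(x,x)$ and the normalization by cubical degeneracies and connections: morphisms in $Nec$ collapse any necklace factor that maps constantly to the basepoint, so that $[s_0 x \mid \tau] = [\tau]$ and $[\tau \mid s_0 x] = [\tau]$, while cubical normalization kills every cell in the image of a degeneracy or connection. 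The gradings match as $\sum_i (\dim \sigma_i - 1)$.

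The chain-map identity $\phi \circ D = \partial_\Lambda \circ \phi$ is verified on each generator $s^{-1}\sigma$ and then extended via the Leibniz rule. The cube $\square_c^{n-1}$ associated to $\sigma\colon \Delta^n \to S$ is canonically the nerve of the poset $P_{\Delta^n}$ of subsets of $\{0, 1, \ldots, n\}$ containing both $0$ and $n$. For $j = 1, \ldots, n-1$, the face $\partial^0_j[\sigma]$ corresponds to omitting the interior vertex $j$ from the maximal subset and is identified in the colimit with $[d_j\sigma]$, while $\partial^1_j[\sigma]$ corresponds to inserting $j$ into the minimal subset $\{0, n\}$ and is identified with the concatenation $[f_j\sigma \mid l_{n-j}\sigma]$. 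These cubical faces match, term by term, the interior simplicial face contributions in $-s^{-1}\partial_C\sigma$ and the Alexander--Whitney contributions in $(s^{-1} \otimes s^{-1})\Delta_C\sigma$; the boundary simplicial face terms $s^{-1}d_0\sigma$ and $s^{-1}d_n\sigma$ arise through the collapse identifications of necklaces with outer degenerate components, exactly as in the $1$-reduced case where these identifications are automatic because the outermost edges of $\sigma$ are degenerate.

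The main obstacle is the precise bookkeeping of signs and the encoding of the boundary simplicial face contributions on the cubical side, since $d_0\sigma$ and $d_n\sigma$ do not appear as direct cube faces of the top cell of $\sigma$. Resolving this requires the full combinatorial analysis of the over-category $(Nec \downarrow S)_{x,x}$ together with the cubical-with-connections normalization, as carried out in Section 4 and Theorem 7.1 of \cite{RiZe16}; the sign conventions are chosen precisely so that, after these identifications, the cubical differential $\partial_\Lambda$ matches the cobar differential $D$ under $\phi$, yielding the desired isomorphism of dg algebras.
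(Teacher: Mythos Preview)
Your map $\phi(s^{-1}\sigma)=[\sigma]$ is not a chain map in the $0$-reduced (non-simply-connected) setting, and this is exactly the point where your argument breaks. Take a non-degenerate $2$-simplex $\sigma$. On the cobar side,
\[
D(s^{-1}\sigma)=\pm s^{-1}d_0\sigma \mp s^{-1}d_1\sigma \pm s^{-1}d_2\sigma \;+\; (s^{-1}d_2\sigma)\otimes(s^{-1}d_0\sigma),
\]
while on the cubical side the top cell of $[\sigma]$ is a $1$-cube with exactly two faces, yielding
\[
\partial_{\Lambda}[\sigma]=\pm[d_1\sigma]\mp[d_2\sigma]\cdot[d_0\sigma].
\]
The terms $s^{-1}d_0\sigma$ and $s^{-1}d_2\sigma$ have no counterpart under your $\phi$. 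Your claim that they ``arise through the collapse identifications of necklaces with outer degenerate components, exactly as in the $1$-reduced case'' is precisely what fails here: in a $1$-reduced simplicial set every $1$-simplex is degenerate, so $d_0\sigma$ and $d_n\sigma$ die in normalized chains and the naive map works; in a merely $0$-reduced $S$ these edges are typically non-degenerate and nothing in the colimit or in cubical normalization kills them.

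The paper's proof (which is Theorem 7.1 of \cite{RiZe16}) repairs this by a twist on $1$-simplices: going in the direction $\Lambda(S)\to\Omega C(S)$ one sets $\varphi([\tau])=s^{-1}\tau+1_{\mathbf{k}}$ for $\tau\in S_1$ and $\varphi([\sigma])=s^{-1}\sigma$ for $\dim\sigma>1$, extended multiplicatively. With this shift, expanding $(s^{-1}d_2\sigma+1)(s^{-1}d_0\sigma+1)$ produces exactly the missing linear terms $s^{-1}d_0\sigma$ and $s^{-1}d_2\sigma$, and more generally the extreme faces $d_0\sigma,\,d_n\sigma$ for any $n$. Equivalently, the inverse map you want is $s^{-1}\tau\mapsto [\tau]-1_{\Lambda}$ on $1$-simplices, not $s^{-1}\tau\mapsto[\tau]$. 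Once this correction is in place your outline of the basis bijection and the interior-face/AW matching is fine, but without it the differential compatibility simply does not hold.
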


\textit{Sketch of Proof.} The isomorphism $\varphi: \Lambda(S) \to \Omega (C(S))$  is induced by the rule
$$\varphi: (\sigma: \Delta^n \to S) \mapsto [\sigma] \in \Omega (C'(S)) \text{ if  }n>1$$ and 
$$\varphi: (\sigma: \Delta^n\to S)  \mapsto [\sigma] + 1_{\mathbf{k}} \in \Omega (C'(S)) \text{  if  }n=1$$
and extending the resulting $\mathbf{k}$-linear map as an algebra map to necklaces of arbitrary length. A priori, this construction defines a dg algebra map $$\varphi: C'_{\square}(\mathfrak{C}_{\square_c}(S)(x,x)) \to \Omega (C'(S)),$$  where $C'(S)$ denotes the dg coalgebra of un-normalized simplicial chains on $S$. It induces a well defined map after normalizing on both sides since if an $n$-cube $\eta \in (\mathfrak{C}_{\square_c}(S)(x,x))_n$ is the image of a degeneracy or a connection then in the  representation $\eta=[g_{\eta}: T_{\eta} \to S, \iota_n]$ described in the above remark, the restriction of $g_{\eta}$ to some bead of $T_{\eta}$ will define a degenerate simplex in $S$ (this fact follows from Proposition 4.2 in \cite{RiZe16}). The resulting induced map $\varphi: \Lambda(S) \to \Omega (C(S))$ is clearly an isomorphism of algebras. \hfill \qed
\\
\\
We deduce Theorem 1 from the above propositions.
\\
\\
\textit{Proof of Theorem 1.} We argue that the functor $C^K: \mathcal{Set}^0_{\Delta} \to \mathcal{C}_{\mathbf{k}}$, $C^K(S)= C(K(S))$ sends weak homotopy equivalences to $\Omega$-quasi-isomorphisms. Let $f: S \to S'$ be a weak homotopy equivalence where $S, S' \in \mathcal{Set}^0_{\Delta}$ with $\{x \} =S_0$ and $\{x'\} = S'_0$. We have an induced weak homotopy equivalence of Kan complexes, which by abuse of notation we denote by $f: K(S) \to K(S')$. By Proposition 5, $\mathfrak{C}(f): \mathfrak{C}(K(S)) \to \mathfrak{C}(K(S'))$ is a weak equivalence of simplicial categories, so $\mathfrak{C}(f): \mathfrak{C}(K(S))(x,x) \to \mathfrak{C}(K(S'))(x',x')$ is a weak homotopy equivalence of simplicial monoids. Applying the normalized simplicial chains (monoidal) functor $\bar{C}: \mathcal{Set}_{\Delta} \to \mathcal{Ch}_{\mathbf{k}}$, we obtain a quasi-isomorphism of dg algebras $\Gamma(f): \Gamma(K(S)) \to \Gamma(K(S'))$. 

We now describe an explicit quasi-isomorphism of dg algebras $\Phi: \Lambda(S) \to \Gamma(S)$ for any simplicial set $S \in \mathcal{Set}^0_{\Delta}$ with $S_0=\{x\}$. Any generator $\eta$ of the un-normalized cubical chain complex $C_{\square}'(\mathfrak{C}_{\square_c}(S)(x,x))_n$ may be represented as an equivalence class $\eta= [g_{\eta}: T_{\eta} \to S, \iota_n]$ where $(g_{\eta}: T_{\eta} \to S ) \in (Nec \downarrow S)$, $V_{T_{\eta}}=n$, and $\iota_n$ is the top dimensional non-degenerate generator in $C_{\square}(\square_c^n)_n$. Define $$\Phi(\eta) := [g_{\eta}: T_{\eta} \to S, e^{\times n}] \in C'_n(\mathfrak{C}(S)(x,x)),$$ where $C'_n(\mathfrak{C}(S)(x,x))$ denotes the un-normalized simplicial $n$-chains on the simplicial set $\mathfrak{C}(S)(x,x)$ and $e^{\times n} \in \bar{C}_n((\Delta^1)^{\times n})$ denotes the $n$-th fold Eilenberg-Zilber product $e^n = e \times ... \times e$ of the top dimensional generator $e \in \bar{C}_1(\Delta^1)$. In other words, the chain $e^{\times n}$ is the (signed) sum of the $n!$ non-degenerate $n$-simplices of $(\Delta^1)^{\times n}$, so that $[g_{\eta}: T \to S, e^{\times n}] \in \Gamma(S)$ is also a sum of $n!$ generators in $\Gamma(S)$. If $\eta$ happens to be a degenerate cube or the image of a connection then the restriction of $g_{\eta}: T_{\eta} \to S$ to at least one bead of $T_{\eta}$ determines a degenerate simplex in $S$. This implies that $\Phi$ induces a map $\Lambda(S) \to \Gamma(S)$ after passing to normalizations. This map preserves the algebra structures since for any two cubes $\eta$ and $\eta'$ of dimension $k$ and $l$ respectively, 
\begin{eqnarray*} \Phi( [g_{\eta} \vee g_{\eta'} : T_{\eta} \vee T_{\eta'} \to S,  \iota_k \otimes \iota_l] )
\\ = [g_{\eta} \vee g_{\eta'} : T_{\eta} \vee T_{\eta'} \to S, e^{\times( k+l)}= e^{\times (k+l)} ]
\\
 = [g_{\eta'}: T_{\eta'} \to S, e^{\times l}] \circ [g_{\eta}: T_{\eta} \to S, e^{\times k}].
 \end{eqnarray*}
An acyclic models argument, using $\square_c^n$ and $(\Delta^1)^{\times n}$ as models, implies that $\Phi$ is a quasi-isomorphism (see Lemma 7.2 of \cite{RiZe16}).

By Proposition 9, $\Lambda(K(S)) \cong \Omega (C(K(S)))=\Omega (C^K(S))$, so we have a natural quasi-isomorphism of dg algebras $\Gamma(K(S))\simeq \Omega (C^K(S))$ and similarly $\Gamma(K(S')) \simeq \Omega (C^K(S'))$. By the naturality of the constructions we conclude that $\Omega(f): \Omega (C^K(S)) \to \Omega (C^K(S'))$ is a quasi-isomorphism of dg algebras, as desired.

Now we deduce part b) of Theorem 1. By Corollary 7 it follows that we have a weak homotopy equivalence of simplcial monoids $\mathfrak{C}(K(S))(x,x) \simeq \textbf{Sing}(\mathbf{\Omega} |S|) (x,x)$ which induces a quasi-isomorphism of dg algebras 
\begin{equation}
\Gamma(K(S)) \simeq \bar{C}( \textbf{Sing}(\mathbf{\Omega} |S| ) (x,x))= \bar{C}( \text{Sing}(\Omega_x |S|)).
\end{equation} By the argument in the above paragraph we have $\Gamma(K(S)) \simeq \Omega (C^K(S))$. Hence, there exists a quasi-isomorphism of dg algebras 
$$\Omega (C^K(S)) \simeq \bar{C}(\text{Sing}( \Omega_x|S|)),$$ as desired. \hfill $\qed$

\subsection{Theorem 2: determining the fundamental group from a dg bialgebra structure on the cobar construction}

For any path connected pointed space $(X,b)$ we have a natural isomorphism of algebras $H_0(\Omega_bX; \mathbf{k}) \cong \mathbf{k}[\pi_1(X,b)]$ between the $0$-th homology group of the based loop space and the fundamental group algebra. Hence, Theoremx 1 implies the following.
\begin{corollary} For any $S \in \mathcal{Set}^0_{\Delta}$ there is a natural isomorphism of $\mathbf{k}$-algebras $H_0(\Omega (C^K(S))) \cong \mathbf{k}[\pi_1(S)]$. 
\end{corollary}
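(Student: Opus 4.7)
The plan is to chain Theorem~\ref{thm1}(b) together with the classical computation of the Pontryagin ring in degree zero. By Theorem~\ref{thm1}(b) there is a natural quasi-isomorphism of dg algebras
\[\Omega (C^K(S)) \simeq \bar{C}(\mathrm{Sing}(\Omega_x|S|)),\]
and a quasi-isomorphism of dg algebras induces an isomorphism of associative $\mathbf{k}$-algebras on $H_0$. Hence it suffices to identify $H_0(\bar{C}(\mathrm{Sing}(\Omega_x|S|)))$ with $\mathbf{k}[\pi_1(S)]$ as $\mathbf{k}$-algebras, and to check that all the identifications involved are natural in $S \in \mathcal{Set}^0_{\Delta}$.

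The identification of $H_0$ proceeds in two routine steps. First, for any topological space $Y$ the zeroth normalized singular homology $H_0(\bar{C}(\mathrm{Sing}(Y)))$ is canonically the free $\mathbf{k}$-module on $\pi_0(Y)$. When $Y$ is a topological monoid, the Pontryagin product on $\bar{C}(\mathrm{Sing}(Y))$ induced by the monoid multiplication descends on $H_0$ to the $\mathbf{k}$-linear extension of the monoid structure on $\pi_0(Y)$. Applying this to the topological monoid $Y = \Omega_x|S|$ of Moore loops yields a natural isomorphism
\[H_0(\bar{C}(\mathrm{Sing}(\Omega_x|S|))) \cong \mathbf{k}[\pi_0(\Omega_x|S|)]\]
of $\mathbf{k}$-algebras. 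Second, the monoid $\pi_0(\Omega_x|S|)$ under concatenation of Moore loops is by definition (and by a standard comparison with homotopy classes of ordinary based loops) isomorphic as a monoid, hence as a group, to $\pi_1(|S|,x) = \pi_1(S)$. Passing to group algebras gives an isomorphism of $\mathbf{k}$-algebras $\mathbf{k}[\pi_0(\Omega_x|S|)] \cong \mathbf{k}[\pi_1(S)]$.

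Composing the three natural isomorphisms produces the desired isomorphism $H_0(\Omega(C^K(S))) \cong \mathbf{k}[\pi_1(S)]$. Naturality in $S$ follows by assembling the naturality of each ingredient: the quasi-isomorphism of Theorem~\ref{thm1}(b) is natural, the singular chains functor is functorial, the Pontryagin computation of $H_0$ is functorial in maps of topological monoids, and the identification $\pi_0(\Omega_x|S|) \cong \pi_1(|S|,x)$ is natural with respect to pointed continuous maps. There is no real obstacle here; the only point requiring care is to verify that the algebra structures are matched at each step (rather than merely the underlying $\mathbf{k}$-modules), in particular that the product on $H_0(\Omega(C^K(S)))$ induced by the dg algebra structure of the cobar construction corresponds under Theorem~\ref{thm1}(b) to the Pontryagin product, which is part of the content of the quasi-isomorphism being one of dg algebras.
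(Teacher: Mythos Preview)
Your proof is correct and follows the same route as the paper: the corollary is stated immediately after recalling the classical isomorphism $H_0(\Omega_b X;\mathbf{k})\cong \mathbf{k}[\pi_1(X,b)]$ and is deduced from Theorem~\ref{thm1}(b) exactly as you do. The paper additionally records the explicit formula for the isomorphism (a generator $[\sigma]\in H_0(\Omega(C^K(S)))$ is sent to the class of $\sigma-1_{\mathbf{k}}$ in $\mathbf{k}[\pi_1(S)]$, extended multiplicatively), but the underlying argument is the same as yours.
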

The isomorphism in Corollary 10 is given explicitly by sending a generator $[\sigma]$ representing a class in $H_0(\Omega (C^K(S)))$ to the class in $\mathbf{k}[\pi_1(S)]$ represented by $\sigma -1_{\mathbf{k}}$ and then extending as an algebra map on monomials of arbitrary length.  

The group ring $\mathbf{k}[\pi_1(S)]$ has a natural Hopf algebra structure: coproduct is determined by $\gamma \mapsto \gamma \otimes \gamma$,  counit by $\epsilon(\gamma)=1_{\mathbf{k}}$, and antipode by $s(\gamma) = \gamma^{-1}$ for all $\gamma \in \pi_1(S)$. Moreover, there is a dg bialgebra structure on $\Omega(C^K(S))$, originally described in \cite{Ba98} in the simply connected case, inducing a Hopf algebra structure on $H_0(\Omega(C^K(S))$ which agrees with that of $\mathbf{k}[\pi_1(X,b)]$ along the above isomorphism. The coassociative coproduct on $\Omega(C^K(S))$ is obtained from the cubical structure as we recall know. 

For any $Q \in \mathcal{Set}_{\square_c}$, we may construct a natural coassociative coproduct $\Delta_{\square}: C_{\square}(Q) \to C_{\square}(Q) \otimes C_{\square}(Q)$ induced by the formula 
\begin{equation*}
\Delta_{\square}(\sigma) = \sum (-1)^{\epsilon} \partial_{j_1}^0 ... \partial^0_{j_p} (\sigma) \otimes \partial^1_{i_1} ... \partial^1_{i_q}(\sigma),
\end{equation*}
where $\sigma \in Q_n$, the sum runs through all shuffles $\{ i_1 < ... < i_q, j_1 < ... < j_p \}$ of $\{1, ..., n\}$ and $(-1)^{\epsilon}$ is the shuffle sign. If $\sigma \in Q_0$ set $\Delta_{\square}(\sigma)= \sigma \otimes \sigma$. The above formula is determined by declaring $\Delta_{\square}(\sigma)= \partial^0_0 (\sigma) \otimes \sigma + \sigma \otimes \partial^1_0(\sigma)$ for any $1$-cube $\sigma \in Q_1$ (same formula as the Alexander-Whitney diagonal of a $1$-simplex) and then extending it using the fact that an $n$-cube is a product of $n$ $1$-cubes. 

If $Q$ is a cubical monoid with connections then $C_{\square}(Q)$ becomes dg bialgebra, i.e. $\Delta_{\square}$ is multiplicative. Moreover, the dg bialgebra structure is unital and counital: the unit is determined by sending $1_\mathbf{k}$ to the identity of the monoid, and the counit $\epsilon: C_{\square}(Q) \to \mathbf{k}$ by sending all generators in $C_{\square}(Q)_0$ to $1_{\mathbf{k}}$ and everything else to $0$. Note $\epsilon$ is also an augmentation for the algebra structure. Applying this construction to the cubical monoid with connections $\mathfrak{C}_{\square}(S)(x,x)$ associated to any $S \in \mathcal{Set}^0_{\Delta}$ we obtain the following.

\begin{proposition} For any $S \in \mathcal{Set}^0_{\Delta}$ the dg algebra structure on $\Omega(C(S))$ extends to a (unital and counital) dg bialgebra structure. 
\end{proposition}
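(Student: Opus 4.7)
The plan is to realize the dg bialgebra structure on $\Omega(C(S))$ by pulling back, through the isomorphism $\varphi\colon \Lambda(S) \xrightarrow{\cong} \Omega(C(S))$ of Proposition 9, a dg bialgebra structure that lives naturally on $\Lambda(S)=C_{\square}(\mathfrak{C}_{\square_c}(S)(x,x))$. The only input needed beyond what is already in the excerpt is the general assertion preceding the proposition: if $Q$ is a cubical monoid with connections, then $(C_{\square}(Q), \cdot, \Delta_{\square})$ is a dg bialgebra with unit, counit, and augmentation as described. So the task reduces to applying this principle to $Q=\mathfrak{C}_{\square_c}(S)(x,x)$ and transporting the result along $\varphi$.

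First I would observe that because $S \in \mathcal{Set}^0_{\Delta}$ has a single vertex $x$, the enriched category $\mathfrak{C}_{\square_c}(S)$ has one object, so its endomorphism object $\mathfrak{C}_{\square_c}(S)(x,x)$ is a monoid in $(\mathcal{Set}_{\square_c}, \otimes)$ with multiplication given by the composition rule $[g'\colon T'\to S,\zeta']\otimes [g\colon T\to S,\zeta]\mapsto [g\vee g'\colon T\vee T'\to S,\zeta\otimes \zeta']$ and unit given by the $0$-cube $[\mathrm{id}\colon\{x\}\to S,*]$. Next I would apply the normalized cubical chain functor $C_\square$: since $C_\square$ is monoidal, the composition rule induces the algebra product of $\Lambda(S)$, and the coproduct $\Delta_\square$ defined by the shuffle formula in the excerpt endows $C_\square(\mathfrak{C}_{\square_c}(S)(x,x))$ with a coassociative coproduct. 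Then I would check (i) that $\Delta_\square$ commutes with the cubical differential, which follows from the standard shuffle-identity computation relating $\partial_j^\epsilon$ with the shuffle sum; (ii) that $\Delta_\square$ descends to the normalized quotient, i.e.\ the subcomplex generated by degeneracies and connections is a coideal, which follows from how $\partial_j^\epsilon$ interact with $\varepsilon_{j,n}$ and $\gamma_{j,n}$; and (iii) that $\Delta_\square$ is multiplicative, so that $\Lambda(S)$ is a dg bialgebra. The unit is the $0$-chain of the identity morphism, and the counit $\epsilon$ projects onto $(C_\square)_0 \twoheadrightarrow \mathbf{k}$, each verified directly from the definitions. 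Finally, Proposition 9 gives the dg algebra isomorphism $\varphi\colon \Lambda(S)\xrightarrow{\cong} \Omega(C(S))$, and transporting the bialgebra structure along $\varphi$ yields the desired extension of the dg algebra structure on $\Omega(C(S))$ to a unital and counital dg bialgebra structure.

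The main obstacle is the multiplicativity of $\Delta_\square$ with respect to the composition-induced product. Concretely, one must show that for $\eta=[g\colon T\to S,\iota_k]$ and $\eta'=[g'\colon T'\to S,\iota_l]$, the shuffle coproduct of $\eta\cdot \eta'=[g\vee g'\colon T\vee T'\to S,\iota_k\otimes \iota_l]$ equals $\Delta_\square(\eta)\cdot \Delta_\square(\eta')$, where the product on the right is the composition in $\mathfrak{C}_{\square_c}(S)$ applied termwise. This is precisely the statement that the shuffle coproduct on the chains of a standard cube is monoidal with respect to the cubical tensor product, and it is this identity that requires the connection structure in order to be well-defined on normalized chains (since composition in $\mathfrak{C}_{\square_c}$ concatenates necklaces and, after applying $\partial_j^\epsilon$, one lands on faces whose representatives are controlled precisely by the combinatorics of connections encoded in formula \eqref{cubes}). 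Once this is verified at the level of representatives $(g\colon T\to S,\zeta)$ via the explicit description of the composition, the remaining verifications (units, counit, antipode on $H_0$) are formal.
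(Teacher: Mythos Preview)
Your proposal is correct and follows essentially the same route as the paper: use the general fact that for a cubical monoid with connections $Q$ the normalized cubical chains $(C_\square(Q),\Delta_\square)$ form a unital counital dg bialgebra, apply this to $Q=\mathfrak{C}_{\square_c}(S)(x,x)$, and then transport along the dg algebra isomorphism $\varphi\colon \Lambda(S)\xrightarrow{\cong}\Omega(C(S))$ of Proposition 9. One small clarification: the multiplicativity of $\Delta_\square$ is a general feature of the Serre diagonal on cubical chains and does not itself hinge on the connections; the connections are needed earlier, so that $T\mapsto \square_c^{V_T}$ is functorial on $Nec$ (handling simplicial codegeneracies), hence so that $\mathfrak{C}_{\square_c}(S)(x,x)$ is well defined in the first place.
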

\begin{proof}
By the discussion in the previous paragraphs we have that the dg algebra structure on $\Lambda(S):= C_{\square}(\mathfrak{C}_{\square_c}(S)(x,x))$ extends to a dg bialgebra structure when equipped with the coproduct $\Delta_{\square}$. The result now follows from Proposition 9.
\end{proof}

Denote the coproduct on $\Omega(C(S))$ by $\nabla: \Omega(C(S)) \to \Omega(C(S)) \otimes  \Omega(C(S))$, the unit by $\eta: \mathbf{k} \to \Omega(C(S))$, and the counit by $\epsilon: \Omega(C(S)) \to \mathbf{k}$. We also denote the multiplication in $\Omega(C(S))$ by $\mu$. Note that $H_0(\Omega(C^K(S)))$ inherits a bialgebra structure and we denote the induced structure maps by the same symbols $\nabla, \mu, \eta, \epsilon$. 

\begin{proposition} For any $S \in \mathcal{Set}^0_{\Delta}$ the bialgebra $H_0(\Omega(C^K(S)))$ is a Hopf algebra, i.e. there exists a natural linear map $s: H_0(\Omega(C^K(S)))\to H_0(\Omega(C^K(S)))$ satisfying $\mu \circ (s \otimes id) \circ \nabla = \eta \circ \epsilon = \mu \circ (id \otimes s) \circ \nabla$. Moreover, there is an isomorphism of Hopf algebras $H_0(\Omega(C^K(S)))\cong \mathbf{k}[\pi_1(S)]$.
\end{proposition}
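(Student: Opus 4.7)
The plan is to transport the standard Hopf algebra structure of $\mathbf{k}[\pi_1(S)]$ along the algebra isomorphism of Corollary 10. Since antipodes, when they exist, are unique and automatically preserved by bialgebra morphisms, both the existence of an antipode on $H_0(\Omega(C^K(S)))$ and the Hopf algebra isomorphism reduce to the following single claim: the algebra isomorphism of Corollary 10 is also a coalgebra isomorphism, where $H_0(\Omega(C^K(S)))$ carries the coproduct induced by $\nabla$ from Proposition 11 and $\mathbf{k}[\pi_1(S)]$ carries the group-like coproduct $\gamma \mapsto \gamma \otimes \gamma$. Once that is established, the transferred antipode $s(\gamma) = \gamma^{-1}$ simultaneously yields the Hopf structure and the Hopf isomorphism.

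Since $\nabla$ and the coproduct on $\mathbf{k}[\pi_1(S)]$ are both multiplicative, and since the isomorphism of Corollary 10 is built as an algebra map, it suffices to verify coalgebra compatibility on algebra generators. The algebra $H_0(\Omega(C^K(S)))$ is generated by classes $[\sigma]$ attached to $1$-simplex loops $\sigma : \Delta^1 \to S$, so I would focus on these. For such $\sigma$ the necklace $T = \Delta^1$ satisfies $V_T = 0$, hence $\sigma$ corresponds to a $0$-cube in $\mathfrak{C}_{\square_c}(S)(x,x)$, and the cubical coproduct reads simply $\Delta_{\square}(\sigma) = \sigma \otimes \sigma$. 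By Proposition 11, $\nabla$ is obtained from $\Delta_{\square}$ by transport along the isomorphism $\varphi: \Lambda(S) \xrightarrow{\cong} \Omega(C(S))$ of Proposition 9, under which $\varphi(\sigma) = [\sigma] + 1$. Therefore
\[ \nabla([\sigma]) = ([\sigma]+1) \otimes ([\sigma]+1) - 1 \otimes 1 = [\sigma]\otimes[\sigma] + [\sigma]\otimes 1 + 1\otimes [\sigma] \]
in $H_0(\Omega(C^K(S)))$. Sending $[\sigma]\mapsto \gamma_\sigma - 1$ via Corollary 10, the right-hand side becomes $\gamma_\sigma \otimes \gamma_\sigma - 1 \otimes 1 = \Delta(\gamma_\sigma - 1)$, which is the image of the group-like coproduct. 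This gives the desired bialgebra isomorphism, from which the Hopf structure and the Hopf algebra isomorphism follow at once.

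The main obstacle in this plan is the verification on $1$-simplex generators described above: it rests on the identification $V_{\Delta^1} = 0$, which places loops at the level of $0$-cubes (where $\Delta_{\square}$ happens to be group-like), together with the constant shift $\varphi(\sigma) = [\sigma] + 1$ that reconciles the cobar and group-ring conventions. The remaining verifications, namely that the unit $\eta(1_\mathbf{k})$ matches the group identity and that the counit $\epsilon$, sending every $0$-cube to $1_\mathbf{k}$, matches the augmentation of $\mathbf{k}[\pi_1(S)]$, are immediate from the definitions given before Proposition 11.
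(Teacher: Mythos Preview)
Your proposal is correct and follows essentially the same approach as the paper's own proof: both hinge on showing that the algebra isomorphism of Corollary 10 is compatible with the coproducts, and both trace this back to the fact that a $1$-simplex corresponds to a $0$-cube on which $\Delta_{\square}$ is group-like. The paper phrases things in the opposite order---first writing down the antipode $[\sigma]\mapsto[\sigma^{-1}]$ explicitly and then declaring the compatibility ``straightforward to check''---whereas you first verify the bialgebra isomorphism on generators and then transport the antipode; your ordering is slightly cleaner since uniqueness of antipodes makes the explicit formula and the antipode equation automatic.
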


\begin{proof} The antipode is induced by the map that sends a $0$-cycle $[\sigma] \in \Omega(C^K(S))$ to $[\sigma^{-1}]$ where $\sigma^{-1} \in \text{Sing}({|S|,x})_1$ is defined by running the loop $\sigma: \Delta^1 \to |S|$ with the opposite orientation. It is straightforward to check this induces a well defined map $s: H_0(\Omega(C^K(S)))\to H_0(\Omega(C^K(S)))$, it satisfies the antipode equation, and that the isomorphism $H_0(\Omega(C^K(S)))\cong \mathbf{k}[\pi_1(S)]$ of Corollary 10 preserves the Hopf algebra structures.
\end{proof}

We deduce Theorem 2 from Theorem 1 and the above propositions.
\\
\\
\textit{Proof of Theorem 2.} For any $S \in \mathcal{Set}_{\Delta}^0$, define $B(S)$ to be the dg coassociative coalgebra $(C^K(S), \partial, \Delta)$ together with the dg bialgebra structure on $\Omega(C^K(S))$ provided by Proposition 11. Proposition 12 tells us that such a dg bialgebra structure has the property that its $0$-th homology admits an antipode making it a Hopf algebra. By the naturality of our constructions it follows that $B(S)$ is functorial on $S$ so it defines a functor $B: \mathcal{Set}^0_{\Delta} \to \mathcal{B}_{\mathbf{k}}$. The fact that $B$ sends weak homotopy equivalences to $\Omega$-quasi-isomorphisms follows from Theorem 1. 

 The functor  $F: (\mathcal{B}_{\mathbf{k}}, \Omega\text{-}q.i.) \to  (\mathcal{C}_{\mathbf{k}}, \Omega\text{-}q.i.)$ is defined by sending any $(C, \partial, \Delta, \nabla, \eta, \epsilon)$ to its underlying connected dg coassociative coalgebra $(C, \partial, \Delta)$. 

The existence of a natural isomorphism of functors $\pi_1 \cong Grp \circ H_{\Omega} \circ B$ follows from the second part of Proposition 12 and the fact that for any group $G$ we have a natural isomorphism $Grp(\mathbf{k}[G]) \cong G$. 
\hfill $\qed$

\subsection{Theorem 3: adding the $E_{\infty}$-structure} We deduce Theorem 3 from constructions in \cite{BeFr04} and observations of \cite{Ka03}. We would like to stress the importance of Kan replacing and using the notion of $\Omega$-quasi-isomorphisms of $\chi$-coalgebras in order to keep the data of the fundamental group: as a consequence of Theorem 3 one may recover both the $E_{\infty}$-structure and the fundamental group of $S$ from any $\chi$-coalgebra $\Omega$-quasi-isomorphic to $C(K(S))$ with the $\chi$-coalgebra structure described in  \cite{BeFr04}. In general, this statement is false if we do not preform a Kan replacement or some form of localization, or if we use the weaker notion of quasi-isomorphisms of $\chi$-coalgebras instead of $\Omega$-quasi-isomorphisms. 

Recall the surjection operad $\chi$ is a dg operad defined as a sequence of $\mathbf{k}$-chain complexes $\{ \chi(1), \chi(2), .... \}$ such that $\chi(r)_d$ is generated as a $\mathbf{k}$-module by surjections of sets $u: \{1,...,r+d\} \to \{1,...,r\}$ which are non-degenerate, i.e. satisfying $u(i) \neq u(i+1)$. A non-degenerate surjection $u: \{1,...,r+d\} \to \{1,...,r\}$ is usually denoted by the string $(u(1),...,u(r+d))$.    For the definition of the differential $\chi(r)_d \to \chi(r)_{d-1}$ and further details we refer to \cite{McSm02} and \cite{BeFr04}. The surjection operad is an $E_{\infty}$-operad, namely,  $\chi$ is quasi-isomorphic to the commutative operad and the action of the symmetric group $\Sigma_r$ on each $\chi(r)$ yields a projective $\Sigma_r$-module.

The surjection operad can be given a filtration $F_1\chi \subset F_2\chi \subset ... \subset \chi$ where $F_n\chi$ is quasi-isomorphic to the normalized singular chains on the (topological) little $n$-cubes operad; in particular, $F_1\chi$ is the associative operad. $F_2 \chi$ is generated by the surjections $(1,2)$, $(1,2,1)$, $(1,2,1,3,1)$, $(1,2,3,...,1,q,1),...$
\\

\textit{Proof of Theorem 3.}
Associated to any $S \in \mathcal{Set}^0_{\Delta}$ there is a natural $\chi$-coalgebra structure on $C^K(S)=C(K(S))$ extending the connected dg coalgebra given by the Alexander-Whitney coassociative coproduct, as described in \cite{BeFr04}. We denote this $\chi$-coalgebra by $E(S)$. Moreover, among the structure maps of $E(S)$, those corresponding to $F_2{\chi}$ define a homotopy $G$-coalgebra on $C^K(S)$, as identified explicitly in \cite{Ka03}. As explained by Kadeishvili, a homotopy $G$-coalgebra structure extending a dg coalgebra gives rise to a dg bialgebra on the cobar construction. More precisely, given any connected $\chi$-coalgebra $\mathbf{C}$, each surjection $(1,2,3,...,1,q,1) \in F_2\chi$ corresponds to a map 
$$E^{1,q}: \mathbf{C} \to \mathbf{C} \otimes \mathbf{C}^{\otimes q}.$$
The direct sum of all $E^{1,q}$ induces a coproduct on $\nabla_{\mathbf{C}}: \Omega C \to \Omega C \otimes \Omega C$, making $\Omega C$, the cobar construction on the underlying connected dg coassociative coalgebra $(C, \partial, \Delta)$ of $\mathbf{C}$, into a unital counital dg bialgebra with counit $\epsilon: \Omega C \to \mathbf{k}$ the canonical projection. The functor $G: (\mathcal{E}_{\mathbf k},\Omega\text{-}q.i.) \to (\mathcal{B}_{\mathbf{k}}, \Omega\text{-}q.i.)$ is defined by sending $\mathbf{C}$ to $(C,\partial, \Delta, \nabla_{\mathbf{C}}, \eta, \epsilon)$ where $\eta$ is the unit of the dg algebra $\Omega C$. In the case of $C^K(S)$ the dg bialgebra structure on $\Omega(C^K(S))$ is naturally isomorphic to $B(S)$, namely, the dg counital coalgebra $(\Omega(C^K(S)), \nabla, \epsilon)$ obtained in Proposition 11  is naturally isomorphic to $(\Omega(C^K(S)), \nabla_{E(S)}, \epsilon)$. Thus, this construction defines a functor $E: \mathcal{Set}^0_{\Delta} \to \mathcal{E}_{\mathbf{k}}$ satisfying $G \circ E  \cong B$. 

Furthermore, $E$ sends weak homotopy equivalences to $\Omega$-quasi-isomorphisms. This follows since the underlying dg coalgebra structure of $E(S)$ is precisely $C^K(S)$ and, by Theorem 1, $C^K$ sends weak homotopy equivalences to $\Omega$-quasi-isomorphisms. 
 \hfill $\qed$
\begin{remark} For a detailed treatment of the signs involved the bialgebra structure on the cobar construction of a $F_2\chi$-coalgebra constructed in \cite{Ka03}, see \cite{Yo13}. For a proof relating Baues coproduct on the cobar construction and Kadeishvili's construction in the simply connected case, see \cite{Qu16}. 
\end{remark}

\subsection*{Acknowledgements} MR acknowledges the support of the grant Fordecyt 265667 and the excellent working conditions in CINVESTAV and \textit{Centro de colaboraci\'on Samuel Gitler} in Mexico City where parts of this research were conducted. MZ would like to thank Owen Gwilliam and Gregory Ginot for helpful conversations and the \textit{Max Planck Institute for Mathematics} for their support and hospitality during his visits. Both authors would like to thank Michael Mandell,  Anibal Medina for suggesting a diagram of functors that would be helpful in summarizing the results, and Dennis Sullivan for insightful discussions.

\bibliographystyle{plain}

\begin{thebibliography}{99}

\bibitem[BGSW18]{BGSW18}
H. Barcelo, C. Greene, A. Salam Jarrah, V. Welker,
{Homology groups of cubical sets with connections,} preprint,
arXiv:1812.07600 

\bibitem[Ad56]{Ad56}
J. Adams,
{On the cobar construction,}
 \textit{Proc. Nat. Acad. Sci. U.S.A}. \textbf{42} (1956), 409-412.


\bibitem[Ba98]{Ba98}
H. J. Baues,
{The cobar construction as a Hopf algebra,}
 \textit{Invent. Math.,} \textbf{32}(3) (1998), 467- 489.


\bibitem[BeFr04]{BeFr04}
C. Berger, B. Fresse, 
{Combinatorial operad actions on cochains,} 
\textit{Math. Proc. Cambridge Philos. Soc.} \textbf{137} (2004), no. 1, 135-174.

\bibitem[BlHa16]{BlHa16}
J. R. Blomquist and J. E. Harper, 
\textit{An integral chains analog of Quillen's rational homotopy theory equivalence,} arXiv:1611.04157

\bibitem[BrHi81]{BrHi81}
R. Brown and P.J. Higgins,
{On the algebra of cubes,}
\textit{J. Pure Appl. Algebra,} \textbf{21} (1981), 233-260. 

\bibitem[DuSp11]{DuSp11}
D. Dugger and D.I. Spivak,
{Rigidification of quasi-categories,}
\textit{Algebr. Geom. Topol.,} \textbf{11}(1) (2011), 225-261.


 \bibitem[Ka03]{Ka03}
T. Kadeishvili, 
{Cochain operations defining Steenrod $\smile_i$-products in the bar construction,}
\textit{Georgian Math. J.} 
\textbf{10} (2003) 115-125.

\bibitem[KaSa05]{KaSa05}
T. Kadeishvili and S. Saneblidze, 
{A cubical model of a fibration,}
\textit{J. Pure Appl. Algebra,} \textbf{196} (2005), 203-228.


\bibitem[Lu09]{Lu09}
J. Lurie,
{Higher topos theory,}
volume 170 of \textit{Annals of Mathematics Studies.} Princeton University Press, Princeton NJ (2009).

\bibitem[Ma06]{Ma06}
M. Mandell, 
{Cochains and homotopy type.}
\textit{Publ. Math. IHES}103, 213-246 (2006)

\bibitem[McSm02]{McSm02}
J. McClure, J.H. Smith, 
{A solution of Deligne's Hochschild cohomology conjecture} in Recent progress in homotopy theory (Baltimore, 2000), Contemp. Math. 293, Amer. Math. Soc. (2002), 153-193.

\bibitem[Qu16]{Qu16}
A. Quesney,
{Homotopy BV-algebra structure on the double cobar construction,}
\textit{J. Pure Appl, Algebra,} \textbf{5} (2016) pp.1963-1989


\bibitem[RiZe16]{RiZe16}
M. Rivera, M. Zeinalian, 
{Cubical rigidification, the cobar construction and the based loop space,}
\textit{Algebr. Geom. Topol.}, \textbf{18} (7) (2018), 3789-3820.

\bibitem[Rie14]{Rie14}
E. Riehl, 
{Categorical homotopy theory,}
\textit{New Mathematical Monographs.} Cambridge University Press (2004).

\bibitem[Yo13]{Yo13}
J. Young,
{Brace bar-cobar duality,} preprint, arXiv:1309.2820 











\end{thebibliography}

 \Addresses

\end{document}